\newtheorem{theorem}{Theorem}[section]
\newtheorem{lemma}[theorem]{Lemma}
\newtheorem{proposition}[theorem]{Proposition}
\newtheorem{corollary}[theorem]{Corollary}
\theoremstyle{definition}
\newtheorem{definition}[theorem]{Definition}
\theoremstyle{remark}
\newtheorem{remark}[theorem]{Remark}
\numberwithin{equation}{section}
\begin{document}
	\author{Stefan Ivkovi\'{c} }
	
	\vspace{15pt}
	
	\title{On New Approach to Fredholm theory in unital $ C^{ *} $ -algebras }

	\maketitle
	\begin{abstract}
		Motivated by Fredholm theory on the standard Hilbert module over a unital $ C^{ *} $ -algebra introduced by Mishchenko and Fomenko, we provide a new approach to axiomatic Fredholm theory in  unital $ C^{ *} $ -algebras established by Ke\v{c}ki\'{c} and Lazovi\'{c} in \cite{KL}. Our approach is equivalent to the approach introduced by Ke\v{c}ki\'{c} and Lazovi\'{c}, however, we provide new proofs which are motivated by the proofs given by Mishchenko and Fomenko in \cite{MF}.
	\end{abstract}
	
	\vspace{15pt}
	
	\begin{flushleft}
		\textbf{Keywords} Fredholm theory, Hilbert module, finite projections, K-group, index
	\end{flushleft} 
	
	\vspace{15pt}
	
	\begin{flushleft}
		\textbf{Mathematics Subject Classification (2010)} Primary MSC 47A53; Secondary MSC 46L08.
	\end{flushleft}
	
	\vspace{30pt}
	
	\section{Preliminaries}
Throughout this paper $ \mathcal{A} $ always stands for a unital $ C^{ *} $ -algebra and $B(\mathcal{A} ) $ denotes the set of all $\mathcal{A} $ - linear bounded operators on $ \mathcal{A} $ when $ \mathcal{A} $ is considered as a right Hilbert module over itself. Since $ \mathcal{A}$ is self-dual Hilbert module over itself, by \cite[Proposition 2.5.2]{MT} all operators that belong to $B(\mathcal{A} ) $ are adjointable. Moreover, by \cite[Corollary 2.5.2]{MT} the set $B(\mathcal{A} ) $ is a unital $ C^{ *} $ -algebra.\\
Let $V$ be a map from $ \mathcal{A} $ into $B(\mathcal{A} ) $ given by $ V(a) = L_{a} $ for all $ a \in \mathcal{A} $ where $L_{a} $ is the corresponding left multiplier by $a.$ Then  $V$ is an isometric *-homomorphism, and, since $ \mathcal{A} $ is unital, it follows that $V$ is in fact an isomorphism. Thus, $B(\mathcal{A} ) $ can be identified with $ \mathcal{A} $ by considering the left multipliers. \\
We recall now the following definition. \\
\begin{definition} \label{d 09}
	\cite[Definition 1.1]{KL} Let $\mathcal{A} $ be an unital $C^{*}$-algebra, and $\mathcal{F} \subseteq \mathcal{A} $ be a subalgebra which satisfies the following conditions:\\
	(i) $\mathcal{F} $ is a selfadjoint ideal in $\mathcal{A} ,$ i.e. for all $a \in \mathcal{A}, b \in \mathcal{F}  $ there holds $ab,ba \in \mathcal{F} ,$ and $a \in \mathcal{F} $ implies $a^{*} \in \mathcal{F} ;$\\
	(ii) There is an approximate unit $p_{\alpha} \in \mathcal{F}$ consisting of projections;\\
	(iii) If $p,q \in \mathcal{F} $ are projections, then there exists $v \in \mathcal{A} ,$ such that $vv^{*}=q $ and $v^{*}v \perp p, $ i.e. $v^{*}v + p $ is a projection as well.\\
	We shall call the elements of such an ideal  \textit{finite type elements}. Henceforward we shall denote this ideal by $\mathcal{F} .$
\end{definition}

Let $V$ be the isometric *-isomorphism given above. If $\mathcal{F} $  is an ideal of finite type elements in $ \mathcal{A} ,$ then it is not hard to see that $ V(\mathcal{F} ) $ is an ideal of finite type elements in $B(\mathcal{A} ),$ so we may identify $\mathcal{F} $ with $ V(\mathcal{F} ) .$

\begin{definition} \label{d 10} \cite[Definition 1.2]{KL}
	Let $\mathcal{A} $ be a unital $C^{*}-$ideal, and let $\mathcal{F} \subseteq \mathcal{A}$ be an algebra of finite type elements. In the set $ \text{ Proj}(\mathcal{F}) $ we define the equivalence relation:  
	$$p \sim q \Leftrightarrow \exists v \in \mathcal{A} \text{ } vv^{*}=p,\text{ } v^{*}v=p, $$
	i.e. Murray - von Neumann equivalence. The set $ S(\mathcal{F})=\text{Proj}(\mathcal{F})\text{ }/\sim $ is a commutative semigroup with respect to addition, and the set,$K(\mathcal{F})=G(S(\mathcal{F})),$ where $G$ denotes the Grothendic functor, is a commutative group.
\end{definition}

\begin{definition} \label{d 11} \cite[Definition 2.1]{KL}
	Let $a \in \mathcal{A} $ and $p,q $ be projections in $\mathcal{A} .$  We say that $a$ is invertible up to pair $(p, q)$ if there exists some $b \in \mathcal{A} $  such that 
	$$(1-q)a(1-p)b=1-q, \text{  } b(1-q)a(1-p)=1-p. $$
	We refer to such $b$ as almost inverse of $a,$ or $(p,q)$-inverse of $a.$
\end{definition}
Next, we recall also the following definition regarding Hilbert modules.
\begin{definition} \label{DP D04} % \textbf{\underline{DP D04}}\\
	\cite[Definition 2.3.1]{MT} A closed submodule $\mathcal{N} $ in a Hilbert $C^{*}$-module $ \mathcal{M}$ is called (topologically) complementable if there exists a closed submodule $\mathcal{L} $ in $\mathcal{M} $ such that  $\mathcal{N}+\mathcal{L}=\mathcal{M} ,\mathcal{N} \cap \mathcal{L}=0.$
\end{definition}	
By the symbol $\tilde{ \oplus} $ \index{$\tilde{ \oplus} $} we denote the direct sum of modules as given in \cite{MT}.

Thus, if $M$ is a Hilbert $C^{*}$-module and $M_{1}, M_{2}$ are two closed submodules of $M,$ we write $M=M_{1} \tilde \oplus M_{2}$ if $M_{1} \cap M_{2}=\lbrace 0 \rbrace$ and $M_{1}+ M_{2}=M.$ If, in addition $M_{1}$  and $M_{2}$ are mutually orthogonal, then we write $M=M_{1} \oplus M_{2}.$
 \begin{remark} \label{r10 r01}
	If $\sqcap \in B(\mathcal{A}) $ is a (skew ) projection, then, since $Im \sqcap $ is closed, by \cite[Theorem 2.3.3]{MT} we get that $Im \sqcap $ is  complementable. Hence, every closed and complementable submodule $M$ of $\mathcal{A} $ is orthogonally complementable. The corresponding orthogonal projection onto $M$ will be denoted by $ P_{M } .$
\end{remark}

\begin{remark} \label{skewprojfinite}
	If $ \sqcap \in B(\mathcal{A})$ has closed range and $ P_{Im \sqcap} \in \mathcal{F} ,$ then, since $P_{Im \sqcap} \sqcap = \sqcap$ and $\mathcal{F}$ is an ideal, we get that $ \sqcap \in \mathcal{F} .$ 
\end{remark}
At the end of this section we give also the following technical results.
\begin{lemma} \label{polar} 
	Let $ T \in B(\mathcal{A})  $ and suppose that $ Im T $ is closed. Then $ Im (T^{*} T)^{1/2 } $ is closed.
	
\end{lemma}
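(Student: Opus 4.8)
The plan is to compare the actions of $T$ and of $|T|:=(T^{*}T)^{1/2}\in B(\mathcal{A})$ on elements of $\mathcal{A}$, and thereby transport to $|T|$ the fact that $T$ is bounded below away from its kernel. Note first that $|T|$ is a positive (hence self-adjoint) element of the $C^{*}$-algebra $B(\mathcal{A})$ with $|T|^{2}=T^{*}T$. I would then record, for every $a\in\mathcal{A}$,
\[
\langle |T|a,|T|a\rangle=\langle a,|T|^{2}a\rangle=\langle a,T^{*}Ta\rangle=\langle Ta,Ta\rangle ,
\]
so that, taking norms in $\mathcal{A}$, $\||T|a\|=\|Ta\|$; in particular $\ker|T|=\ker T$. (This identity is also exactly what underlies the polar decomposition of $T$.)

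Next I would use that, since $Im\, T$ is closed, by \cite[Theorem 2.3.3]{MT} the kernel $\ker T$ is orthogonally complemented, so $\mathcal{A}=\ker T\oplus(\ker T)^{\perp}$. For $a\in\mathcal{A}$ written as $a=a_{0}+a_{1}$ with $a_{0}\in\ker T$ and $a_{1}\in(\ker T)^{\perp}$ one has $Ta=Ta_{1}$, hence $T$ maps $(\ker T)^{\perp}$ onto $Im\, T$; moreover this restriction is injective, because $(\ker T)\cap(\ker T)^{\perp}=\{0\}$ in any Hilbert module. So $T$ restricts to a bounded linear bijection of the Banach space $(\ker T)^{\perp}$ onto the Banach space $Im\, T$ (closed by hypothesis), and the open mapping theorem provides a constant $c>0$ with $\|Ta\|\ge c\|a\|$ for all $a\in(\ker T)^{\perp}$. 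Combining with the identity above, $\||T|a\|\ge c\|a\|$ for all $a\in(\ker T)^{\perp}=(\ker|T|)^{\perp}$.

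Finally I would conclude: using again the orthogonal decomposition $\mathcal{A}=\ker|T|\oplus(\ker|T|)^{\perp}$ together with $|T|a=|T|a_{1}$ for $a=a_{0}+a_{1}$ in it, one gets $Im\,|T|=|T|\big((\ker|T|)^{\perp}\big)$. If $|T|a_{n}\to y$ with $a_{n}\in(\ker|T|)^{\perp}$, then $\|a_{n}-a_{m}\|\le c^{-1}\||T|(a_{n}-a_{m})\|\to 0$, so $(a_{n})$ converges in the closed submodule $(\ker|T|)^{\perp}$ to some $a$, and $y=|T|a\in Im\,|T|$. Therefore $Im\,(T^{*}T)^{1/2}=Im\,|T|$ is closed.

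The single genuinely module-theoretic input is the statement used in the second paragraph --- that a closed-range adjointable operator has orthogonally complemented kernel --- which is where the argument leaves purely Banach-space territory, and hence the step I would check most carefully; it is, however, exactly \cite[Theorem 2.3.3]{MT}, already invoked in the paper for complementability of closed submodules. (Alternatively one could argue spectrally, deducing from closedness of $Im\, T$ that $0$ is isolated in, or absent from, $\sigma(T^{*}T)$, hence also from $\sigma\big((T^{*}T)^{1/2}\big)$ since $t\mapsto\sqrt{t}$ is a homeomorphism of $[0,\infty)$, and then obtaining the range as the image of a spectral projection in $B(\mathcal{A})$; the argument above is the more elementary one.)
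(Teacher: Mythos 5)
Your proof is correct, and it takes a genuinely different (if closely related) route from the paper's. The paper does not compare $\|Ta\|$ with $\|(T^{*}T)^{1/2}a\|$ directly; instead it first shows that $Im\, T^{*}T$ is closed, by writing $T=PT$ with $P=P_{Im T}$, hence $T^{*}=T^{*}P$ and $Im\, T^{*}=Im\, T^{*}T$, where $Im\, T^{*}$ is closed by \cite[Theorem 2.3.3]{MT}; it then applies the same theorem to $T^{*}T$ to obtain $\mathcal{A}=Im\, T^{*}T\oplus\ker T^{*}T$, notes that $T^{*}T$ maps $Im\, T^{*}T$ isomorphically onto itself, deduces that $(T^{*}T)^{1/2}$ is bounded below on $Im\, T^{*}T$, and finally uses $\ker T^{*}T=\ker (T^{*}T)^{1/2}$ to identify $Im\,(T^{*}T)^{1/2}$ with $(T^{*}T)^{1/2}(Im\, T^{*}T)$. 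Since the range is closed, $(\ker T)^{\perp}=Im\, T^{*}=Im\, T^{*}T$, so your decomposition $\mathcal{A}=\ker T\oplus(\ker T)^{\perp}$ and the paper's $\mathcal{A}=Im\, T^{*}T\oplus\ker T^{*}T$ are the same submodule decomposition; what differs is how boundedness below of $|T|$ on the complement of the kernel is obtained: you transfer it from $T$ via the $C^{*}$-identity $\||T|a\|=\|Ta\|$ combined with the open mapping theorem applied to $T\colon(\ker T)^{\perp}\to Im\, T$, whereas the paper extracts it from the isomorphism $T^{*}T\colon Im\, T^{*}T\to Im\, T^{*}T$. Your version is slightly more elementary and self-contained (one appeal to \cite[Theorem 2.3.3]{MT}, for orthogonal complementability of $\ker T$, plus an explicit Cauchy-sequence argument for closedness), while the paper's version yields as a by-product the useful facts that $Im\, T^{*}$ and $Im\, T^{*}T$ are closed and coincide, which fit naturally with its subsequent use of the polar decomposition. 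Both arguments are valid.
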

\begin{proof}
	By the proof of \cite[Theorem 2.3.3]{MT} we have that $ Im T^{*} $ is closed when $ ImT $ is closed. In addition, $ ImT $ is orthogonally complementable in $ \mathcal{A}$ by \cite[Theorem 2.3.3]{MT} . Let $P$ denote the orthogonal projection onto $ ImT .$ Then $ T=PT, $ hence $ T^{*} = T^{*} P. $ It follows that  $ Im T^{*} = Im T^{*} P =Im T^{*} T  ,$ so $ Im T^{*} T $ is closed. Hence, $$ \mathcal{A} = Im T^{*} T  \oplus \ker T^{*} T   $$ by \cite[Theorem 2.3.3]{MT}, and $T^{*} T $ maps $Im T^{*} T $ isomorphically onto itself, which gives that $ ((T^{*} T )^{ 1/2 })_{ \vert Im T^{*} T } $ is bounded below.\\
	Next, it is obvious that $  \ker T^{*} T = \ker (T^{*} T)^{1/2 } . $ Indeed, if $ x \in \ker T^{*} T ,$ then $ \langle (T^{*} T)^{1/2 } x,(T^{*} T)^{1/2 } x \rangle = \langle (T^{*} T) x, x \rangle  = 0 ,$ so $ \ker T^{*} T \subseteq \ker (T^{*} T)^{1/2 } ,$ whereas the opposite inclusion is obvious. Thus we obtain that   $$ \mathcal{A} = Im T^{*} T  \oplus \ker (T^{*} T)^{ 1/2 }   ,$$ so $ (T^{*} T)^{1/2 } ( ImT^{*} T ) = Im (T^{*} T)^{1/2 } .$ However, since $ ((T^{*} T )^{ 1/2 })_{ \vert Im T^{*} T } $ is bounded below, we must have that $ Im (T^{*} T)^{1/2 } $ is closed. 
\end{proof} 
\begin{corollary} \label{cpolar}
Let $ T \in B(\mathcal{A})  $ and suppose that $ Im T $ is closed. Then $T$ admits polar decomposition.
\end{corollary}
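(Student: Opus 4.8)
The plan is to construct the partial isometry explicitly, imitating the classical Hilbert-space construction but using the complementability results available for closed submodules of $\mathcal{A}$. Write $|T| = (T^{*}T)^{1/2}$. First I would record the identity $\langle |T|x, |T|x \rangle = \langle T^{*}Tx, x \rangle = \langle Tx, Tx \rangle$ for all $x \in \mathcal{A}$; this shows simultaneously that $\ker T = \ker |T|$ and that the assignment $|T|x \mapsto Tx$ is a well-defined isometry from $Im|T|$ onto $Im T$.

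Next I would invoke Lemma \ref{polar} to get that $Im|T|$ is closed, together with \cite[Theorem 2.3.3]{MT} (used as in Remark \ref{r10 r01}) to obtain the orthogonal decompositions $\mathcal{A} = Im|T| \oplus \ker|T|$ and $\mathcal{A} = Im T \oplus \ker T^{*}$, noting $\ker|T| = \ker T$. Then I would define $v \colon \mathcal{A} \to \mathcal{A}$ by setting $v = 0$ on $\ker T$ and $v(|T|x) = Tx$ on $Im|T|$. By the isometry property above, $v$ is well defined and bounded with $\|v\| \le 1$, and since $\mathcal{A}$ is self-dual, $v$ is automatically adjointable, i.e. $v \in B(\mathcal{A})$ by \cite[Proposition 2.5.2]{MT}.

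Finally I would verify the polar decomposition identities: $v|T| = T$ holds by construction, and $v$ is a partial isometry with $v^{*}v = P_{Im|T|}$ and $vv^{*} = P_{Im T}$, using that $v$ restricts to an isometric isomorphism $Im|T| \to Im T$ and vanishes on the orthogonal complement $\ker T$ of $Im|T|$. In particular $v^{*}v$ and $vv^{*}$ are (orthogonal) projections, so $T = v|T|$ is a genuine polar decomposition. If an explicit formula for $v^{*}$ is wanted, it is the map $Tx \mapsto |T|x$ on $Im T$ extended by $0$ on $\ker T^{*}$, and one checks $\langle vx, y \rangle = \langle x, v^{*}y \rangle$ by splitting $x$ and $y$ along the two orthogonal summands.

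I do not expect a serious obstacle here. The only points needing care are that the extension of the partially defined isometry to all of $\mathcal{A}$ is legitimate — which is exactly what closedness of $Im|T|$ (Lemma \ref{polar}) together with orthogonal complementability of $Im|T|$ and $Im T$ (\cite[Theorem 2.3.3]{MT}) provides — and that $v$ lies in $B(\mathcal{A})$, which is automatic from self-duality. Everything else is a routine check on the two orthogonal summands.
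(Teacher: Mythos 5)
Your construction is correct, but it takes a more hands-on route than the paper. The paper's proof is two lines: it invokes Lemma \ref{polar} to get that $Im\,(T^{*}T)^{1/2}$ is closed, then \cite[Theorem 2.3.3]{MT} to get that $Im\,T$ and $Im\,(T^{*}T)^{1/2}$ are orthogonally complementable, and at that point simply appeals to the standard criterion for polar decomposition of adjointable operators on Hilbert $C^{*}$-modules (complementability of the closures of $Im\,T$ and $Im\,|T|$ is exactly the hypothesis of that criterion). You instead rebuild that criterion by hand in the special case of the module $\mathcal{A}$: you use the same two ingredients (Lemma \ref{polar} and \cite[Theorem 2.3.3]{MT}) to get the orthogonal decompositions $\mathcal{A}=Im\,|T|\oplus\ker|T|$ and $\mathcal{A}=Im\,T\oplus\ker T^{*}$, and then define the partial isometry $v$ explicitly via $|T|x\mapsto Tx$, checking $v|T|=T$, $v^{*}v=P_{Im\,|T|}$, $vv^{*}=P_{Im\,T}$. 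The one point where your argument genuinely uses the special setting is adjointability of $v$: in a general Hilbert module a bounded module map built this way need not be adjointable (which is precisely why the general criterion needs the complementability hypotheses), but here self-duality of $\mathcal{A}$ and \cite[Proposition 2.5.2]{MT} make it automatic, as you correctly note, and closedness of $Im\,|T|$ removes any extension-by-continuity issue. So your proof is self-contained and valid, at the price of redoing a known module-theoretic fact that the paper simply cites; the paper's version is shorter and works verbatim in any Hilbert module where the two ranges are complementable, while yours makes the partial isometry explicit, which is the same object later exploited in Lemma \ref{r10 l01}.
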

\begin{proof}
	By Lemma \ref{polar} we have that $ Im (T^{*} T)^{1/2 } $ is closed. Hence, by \cite[Theorem 2.3.3]{MT} we get that $ImT$ and $ Im (T^{*} T)^{1/2 } $  are orthogonally complementable, so $T$ admits polar decomposition. 
\end{proof}

\section{Main results} 

We start with the following lemma. 

\begin{lemma} \label{r10 l01}
	Let $ \tilde{P}, \tilde{Q} \in Proj (\mathcal{A}) .$ Then $\tilde{P} \sim \tilde{Q}$ if and only if $Im \tilde{P} \cong Im \tilde{Q} .$
\end{lemma}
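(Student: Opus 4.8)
The plan is to move back and forth between an element $v \in \mathcal A$ implementing Murray--von Neumann equivalence and an adjointable unitary between the range submodules, using the identification $B(\mathcal A) \cong \mathcal A$ via left multipliers recalled in Section 1. I will regard $\tilde P$ and $\tilde Q$ simultaneously as elements of $\mathcal A$ and as the operators $L_{\tilde P}, L_{\tilde Q} \in B(\mathcal A)$, so that $\mathrm{Im}\,\tilde P = \tilde P\mathcal A$ and $\mathrm{Im}\,\tilde Q = \tilde Q\mathcal A$ are the orthogonally complemented Hilbert submodules onto which they project, and I read $\mathrm{Im}\,\tilde P \cong \mathrm{Im}\,\tilde Q$ as the existence of an adjointable unitary $u : \tilde Q\mathcal A \to \tilde P\mathcal A$.

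For the ``only if'' direction I would start from $v \in \mathcal A$ with $vv^* = \tilde P$ and $v^*v = \tilde Q$, note that $v$ is then a partial isometry so that $v = vv^*v$, and show that $L_v$ restricts to the required unitary. Concretely: if $x \in \tilde Q\mathcal A$ then $\tilde P(vx) = vv^*vx = vx$, so $L_v$ carries $\tilde Q\mathcal A$ into $\tilde P\mathcal A$; symmetrically $L_{v^*}$ carries $\tilde P\mathcal A$ into $\tilde Q\mathcal A$; and the relations $v^*v = \tilde Q$, $vv^* = \tilde P$ show these restrictions are mutually inverse and inner-product preserving. Hence $L_v|_{\tilde Q\mathcal A}$ is a unitary onto $\tilde P\mathcal A$, which gives $\mathrm{Im}\,\tilde P \cong \mathrm{Im}\,\tilde Q$.

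For the ``if'' direction I would take an adjointable unitary $u : \tilde Q\mathcal A \to \tilde P\mathcal A$ and let $\tilde v \in B(\mathcal A)$ be $u$ extended by $0$ on $(1-\tilde Q)\mathcal A$ (that is, $\tilde v = \iota\,u\,L_{\tilde Q}$ with $\iota$ the inclusion $\tilde P\mathcal A \hookrightarrow \mathcal A$). A direct inner-product computation gives $\tilde v^* = u^*\,L_{\tilde P}$ (extended by $0$), so $\tilde v$ is indeed adjointable, and then $\tilde v^*\tilde v = L_{\tilde Q}$ and $\tilde v\tilde v^* = L_{\tilde P}$ because $u^*u = \mathrm{id}_{\tilde Q\mathcal A}$ and $uu^* = \mathrm{id}_{\tilde P\mathcal A}$. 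Since $V : \mathcal A \to B(\mathcal A)$ is onto, $\tilde v = L_v$ for some $v \in \mathcal A$, and applying $V^{-1}$ to the two operator identities yields $v^*v = \tilde Q$ and $vv^* = \tilde P$, i.e. $\tilde P \sim \tilde Q$.

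The part I expect to require the most care is this last direction: one must check honestly that the zero-extension $\tilde v$ of $u$ is adjointable on all of $\mathcal A$ (so that it lies in $B(\mathcal A)$ and hence equals some $L_v$), and keep track of the projections $\tilde P,\tilde Q$ so that the products $\tilde v^*\tilde v$ and $\tilde v\tilde v^*$ collapse to the full projections rather than to corners of them. If one prefers to allow $\mathrm{Im}\,\tilde P \cong \mathrm{Im}\,\tilde Q$ to mean only that there is a bounded invertible module map between them, then I would first pass to its unitary part by polar decomposition (Corollary \ref{cpolar}), which is available since $\tilde P\mathcal A$ and $\tilde Q\mathcal A$ are complemented Hilbert modules, and the remainder of the argument is unchanged.
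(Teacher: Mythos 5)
Your argument is correct, and its skeleton is the same as the paper's: both reduce the lemma to passing between a partial isometry $v$ with $v^{*}v$, $vv^{*}$ equal to the two projections and a module isomorphism between their ranges, and the easy direction (restricting left multiplication by $v$) is handled identically. The difference is where the polar decomposition enters. The paper never assumes the isomorphism $\mathcal{U}\colon Im\,\tilde{P}\to Im\,\tilde{Q}$ is unitary: it extends it by zero to $T=J\mathcal{U}\tilde{P}\in B^{a}(\mathcal{A})$ (adjointable by \cite[Corollary 2.5.3]{MT}), notes that $Im\,T=Im\,\tilde{Q}$ is closed and complementable, and applies Corollary \ref{cpolar} to $T$; the partial isometry of that decomposition already satisfies $V^{*}V=P_{\ker T^{\perp}}=\tilde{P}$ and $VV^{*}=P_{Im\,T}=\tilde{Q}$. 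You instead settle the unitary case first, where the zero extension is itself the required partial isometry and no polar decomposition is needed, and you defer the general invertible case to a final reduction. Two remarks on that reduction. First, it is the case actually needed: in the paper the lemma is applied (e.g.\ in Lemma \ref{r10 l03}) to restrictions such as $F_{\vert_{N}}$, which are merely bounded module isomorphisms, so your last paragraph is the main argument rather than an optional variant. Second, Corollary \ref{cpolar} is stated for operators on $\mathcal{A}$, not for maps between submodules, so you should either apply it to the zero extension $\iota\,u\,L_{\tilde{Q}}$ (which is exactly the paper's route, and then your separate unitary-case computation becomes unnecessary), or observe that for an invertible adjointable $u$ the polar decomposition is elementary because $(u^{*}u)^{1/2}$ is invertible; the latter avoids the closed-range argument of Lemma \ref{polar} altogether, which is the small simplification your ordering buys. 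Also note that adjointability of $u$ (needed to form $u^{*}$) is not automatic for maps between Hilbert modules and should be justified by self-duality of the complemented submodules, i.e.\ by \cite[Corollary 2.5.3]{MT}, as the paper does for $T$.
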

  
\begin{proof}
	Suppose that $Im \tilde{P} \cong Im \tilde{Q} $ and let $\mathcal{U} $ be an isomorphism from $Im \tilde{P} $ onto $Im \tilde{Q} $.
	Set $T:= J \mathcal{U} \tilde{P}  $ where $J: Im \tilde{Q} \rightarrow \mathcal{A} $ is inclusion. Then $T \in  (\mathcal{A}) $  and $Im T = Im \tilde{Q} $ is closed. Hence, by Corollary \ref{cpolar} we deduce that  $T$ admits polar decomposition. The partial isometry $V$ from this decomposition satisfies that $V^{*}V = P_{\ker T^{\perp}} = \tilde{P} $, and $VV^{*} = P_{Im T} = \tilde{Q} .$\\
	 Conversely, if $\tilde{P} \sim \tilde{Q} ,$ then there exists some $ V \in \mathcal{A}$ such that $VV^{*} = \tilde{Q}   $ and $V^{*}V = \tilde{P} .$ Then $\tilde{Q} V \tilde{P} $ is the desired isomorphism.
\end{proof}

\begin{proposition} \label{r10 pr01}
	Let $\lbrace P_{\alpha}\rbrace_{\alpha} $ be an approximate unit for $\mathcal{F} $ consisting of orthogonal projections and $N$ be a closed, complementable submodule of $\mathcal{A} $ such that $P_{N} \in \mathcal{F} .$ Then there exists some $\alpha_{0} $ and a closed submodule $M$ of $\mathcal{A} $ such that $Im(I-P_{\alpha_{0}}) \subseteq M $ and $ \mathcal{A} = M \tilde{\oplus} N .$
\end{proposition}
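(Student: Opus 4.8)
The plan is to build a (skew) projection $\Pi \in B(\mathcal{A})$ whose range is exactly $N$ and whose kernel contains $\operatorname{Im}(I-P_{\alpha_0})$, and then to take $M := \ker\Pi$. This choice of $M$ is the natural one: a skew projection automatically produces a direct–sum decomposition $\mathcal{A} = \operatorname{Im}\Pi \,\tilde{\oplus}\, \ker\Pi$, and by Remark \ref{r10 r01} both summands are closed and (orthogonally) complementable. Thus the whole problem reduces to producing such a $\Pi$.

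First I would normalize: since $N$ is closed and complementable, Remark \ref{r10 r01} gives that it is orthogonally complementable, so $P_N$ makes sense as the orthogonal projection onto $N$, and by hypothesis $P_N \in \mathcal{F} \subseteq B(\mathcal{A})$. Because $\{P_\alpha\}_\alpha$ is an approximate unit for $\mathcal{F}$, we have $P_N P_\alpha \to P_N$, hence $P_N - P_N P_\alpha P_N = P_N(P_N - P_\alpha P_N) \to 0$ in norm. Fix $\alpha_0$ with $\|P_N - P_N P_{\alpha_0} P_N\| < 1$ and set $c := P_N P_{\alpha_0} P_N$, $d := c + (I - P_N)$. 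Then $\|I - d\| = \|P_N - c\| < 1$, so $d$ is invertible in the unital $C^{*}$-algebra $B(\mathcal{A})$; moreover $d P_N = P_N d = c$, so $d$ (and therefore $d^{-1}$) commutes with $P_N$, and $d^{-1}c = c d^{-1} = P_N$. The compression $P_N P_{\alpha_0}P_N$ being invertible ``on $N$'' is the heart of the matter, and it is precisely the approximate–unit property that delivers it.

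Next I would put $\Pi := d^{-1}P_N P_{\alpha_0}$, which is adjointable, and verify the three required properties by routine computation: (a) $\Pi^2 = \Pi$, using $P_N P_{\alpha_0}P_N = c$, the commutation of $d^{-1}$ with $P_N$, and $c d^{-1} = P_N$; (b) $\operatorname{Im}\Pi = N$, since on the one hand $\operatorname{Im}\Pi \subseteq d^{-1}(N) = N$ (as $d^{-1}$ preserves $N$) and on the other hand $\Pi P_N = d^{-1}c = P_N$ shows $\Pi$ restricts to the identity on $N$; (c) $\ker\Pi \supseteq \ker P_{\alpha_0}$, because $d^{-1}$ is injective, so $\Pi x = 0$ is equivalent to $P_N P_{\alpha_0}x = 0$, which certainly holds when $P_{\alpha_0}x = 0$.

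Finally, take $M := \ker\Pi = \operatorname{Im}(I-\Pi)$. Since $\Pi$ is a skew projection in $B(\mathcal{A})$, its range $\operatorname{Im}\Pi = N$ and its kernel $M$ are complementary closed submodules with $\mathcal{A} = N \,\tilde{\oplus}\, M$, and by Remark \ref{r10 r01} they are orthogonally complementable; in particular $M$ is a closed submodule of $\mathcal{A}$ with $\operatorname{Im}(I - P_{\alpha_0}) = \ker P_{\alpha_0} \subseteq \ker\Pi = M$, which is exactly the claim. I expect the only genuinely non-formal step to be the construction of $\Pi$: one must notice that $\Pi = P_N$ fails (its kernel $N^{\perp}$ need not contain $\ker P_{\alpha_0}$) and that the correct fix is to perturb $P_N$ by the invertible factor $d^{-1}$ so as to enlarge the kernel enough while pinning the range at $N$; once $\Pi$ is in hand, everything else is bookkeeping.
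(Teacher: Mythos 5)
Your proof is correct, and it shares its key first step with the paper: both exploit the approximate unit to get $\|P_{N}-P_{N}P_{\alpha_{0}}P_{N}\|<1$, i.e.\ invertibility of the compression of $P_{\alpha_{0}}$ to $N$. Where you diverge is in how the decomposition $\mathcal{A}=M\,\tilde{\oplus}\,N$ is extracted. The paper works geometrically: it deduces that $P_{\alpha_{0}}|_{N}$ is bounded below, hence $\operatorname{Im}P_{\alpha_{0}}P_{N}$ is closed and (by the closed-range theorem of Manuilov--Troitsky) orthogonally complementable, takes $M=(\operatorname{Im}P_{\alpha_{0}}P_{N})^{\perp}$, and then argues via the orthogonal projection $\tilde{P}$ onto $\operatorname{Im}P_{\alpha_{0}}P_{N}$ (whose restriction to $N$ is an isomorphism) that $\mathcal{A}=M\,\tilde{\oplus}\,N$ -- the last step being left as "not hard to deduce." You instead build the idempotent $\Pi=d^{-1}P_{N}P_{\alpha_{0}}$ explicitly, with $d=P_{N}P_{\alpha_{0}}P_{N}+(I-P_{N})$ invertible and commuting with $P_{N}$, verify $\Pi^{2}=\Pi$, $\operatorname{Im}\Pi=N$, $\ker\Pi\supseteq\ker P_{\alpha_{0}}$, and set $M=\ker\Pi$; the decomposition then comes for free from the skew projection, with no appeal to closed-range or polar-decomposition machinery. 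Note that the two constructions actually produce the same module: since $(\operatorname{Im}P_{\alpha_{0}}P_{N})^{\perp}=\ker\bigl((P_{\alpha_{0}}P_{N})^{*}\bigr)=\ker(P_{N}P_{\alpha_{0}})=\ker\Pi$, your $M$ coincides with the paper's. What your route buys is self-containedness and an explicit formula for the projection onto $N$ along $M$ (which also makes the final step of the paper's argument transparent); what the paper's route buys is consistency with the toolkit it reuses elsewhere (closed range, orthogonal complementability, restrictions of projections), at the cost of leaving the concluding step implicit.
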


\begin{proof}
	Choose $\alpha_{0} $ sufficiently large such that $\parallel P_{N}-P_{\alpha_{0}}P_{N} \parallel < 1 .$ Then we get that $\parallel P_{N}-P_{N}P_{\alpha_{0}}P_{N} \parallel < 1 $ which gives that $P_{N}P_{\alpha_{0}}P_{N} $ is invertible in the corner $ C^{*} $ - algebra $P_{N} \mathcal{A} P_{N} .$ It is not hard to deduce then that ${P_{\alpha_{0}}}_{\vert_{N}}  $ must be bounded below. So $Im P_{\alpha_{0}} P_{N}  $ is closed, thus orthogonally complementable by  \cite[Theorem 2.3.3]{MT}.\\
	  Let $M= (Im P_{\alpha_{0}} P_{N})^{\perp} . $ Then $Im (I- P_{\alpha_{0}}) \subseteq M .$\\
	   Set $\tilde{P} $ to be the orthogonal projection onto $Im  P_{\alpha_{0}} P_{N}  .$ Then $ \tilde{P} \leq  P_{\alpha_{0}} $ and therefore  $\tilde{P}_{\vert_{N}} = \tilde{P} {P_{\alpha_{0}}}_{\vert_{N}} = {P_{\alpha_{0}}}_{\vert_{N}} .$ 
	Since ${P_{\alpha_{0}}}_{\vert_{N}} $ is an isomorphism onto $Im P_{\alpha_{0}}P_{N}  ,$ it follows that $\tilde{P}_{\vert_{N}} $ an isomorphism onto $Im P_{\alpha_{0}}P_{N} = Im \tilde{P} .$ It is then not hard to deduce that $\mathcal{A} = M \tilde{\oplus} N .$ 
\end{proof}

\begin{lemma} \label{r10 l03}
	 Let $N $ be a closed complementable submodule of $\mathcal{A} $ such that $P_{N} \in \mathcal{F} .$ Suppose that $F \in B (\mathcal{A})  $ is such that $F_{\vert_{N}}$ an isomorphism. Then $F(N) $ is  complementable and $P_{F(N)} \in \mathcal{F} .$
\end{lemma}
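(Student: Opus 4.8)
The plan is to show that $F(N)$ is closed and that $P_{F(N)}$ is equivalent, via Murray–von Neumann equivalence realized by a partial isometry in $\mathcal{A}$, to a projection dominated by $P_N$; since $\mathcal{F}$ is an ideal and $P_N \in \mathcal{F}$, this will force $P_{F(N)} \in \mathcal{F}$. First I would observe that $F(N)$ is closed: since $F_{\vert_N}$ is an isomorphism (in particular bounded below on $N$) and $N$ is closed, the image $F(N)$ is a closed submodule of $\mathcal{A}$. Then I would introduce the operator $T := F P_N \in B(\mathcal{A})$, which has range $Im T = F(N)$, hence closed. By Corollary \ref{cpolar}, $T$ admits a polar decomposition $T = W|T|$ with $W$ a partial isometry, $W^*W = P_{\ker T^\perp}$ and $WW^* = P_{Im T} = P_{F(N)}$. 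This already shows $F(N)$ is orthogonally complementable (Remark \ref{r10 r01}).

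Next I would identify $\ker T$. Since $F_{\vert_N}$ is injective, $\ker(FP_N) = \ker P_N = Im(I - P_N)$, so $P_{\ker T^\perp} = P_N$. Therefore $W^*W = P_N$ and $WW^* = P_{F(N)}$, which exhibits $P_{F(N)} \sim P_N$ in the sense of Definition \ref{d 10}. Finally, since $P_N \in \mathcal{F}$ and $\mathcal{F}$ is a self-adjoint ideal, from $P_{F(N)} = W P_N W^*$ with $W \in \mathcal{A}$ we conclude $P_{F(N)} \in \mathcal{F}$. This also completes the verification that $F(N)$ is complementable.

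The one step that requires a little care — and which I expect to be the main (minor) obstacle — is the identification $\ker(FP_N) = \ker P_N$. One inclusion is trivial. For the other, if $x \in \ker(FP_N)$ then $F(P_N x) = 0$; since $P_N x \in N$ and $F_{\vert_N}$ is injective, $P_N x = 0$, so $x \in \ker P_N$. This uses only injectivity of $F$ on $N$, which is part of the hypothesis. A second small point is ensuring $P_{\ker T^\perp} = P_N$ as genuine orthogonal projections rather than merely as skew projections onto the same submodule; but since $\mathcal{A}$ is self-dual, closed complemented submodules have unique orthogonal projections (Remark \ref{r10 r01}), so $\ker T = Im(I - P_N)$ forces $P_{\ker T^\perp} = P_N$ on the nose. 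Everything else is a direct application of the polar decomposition from Corollary \ref{cpolar} together with the ideal property of $\mathcal{F}$.
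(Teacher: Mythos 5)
Your proof is correct and follows essentially the same route as the paper: the paper deduces $P_N \sim P_{F(N)}$ by citing Lemma \ref{r10 l01}, whose own proof is exactly the polar-decomposition argument you carry out explicitly with $T = FP_N$ (there written as $J\,\mathcal{U}\,\tilde{P}$). Inlining that lemma and checking $\ker(FP_N)=\ker P_N$ directly is a fine, slightly more self-contained presentation of the identical idea, finished as in the paper by the ideal property of $\mathcal{F}$.
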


\begin{proof}
	Since $N$ is complementable, it is  orthogonally complementable by Remark \ref{r10 r01}. Now, $ F(N)=ImFP_{N},$ so by \cite[Theorem 2.3.3]{MT}, $ F(N)$  is  complementable in $\mathcal{A} .$ Since $ F_{\vert_{N}}$ is an isomorphism onto $F(N) ,$ we have that $ P_{N} \sim P_{F(N)}$ by Lemma \ref{r10 l01}. 
\end{proof}

\begin{lemma} \label{r10 l04}
	Let $M,N$ be two  closed, complementable submodules of $\mathcal{A} $ such that $P_{N}, P_{M} \in \mathcal{F} .$ Suppose that $M \cap N = \lbrace 0 \rbrace $ and that $M+N $ is closed and complementable in $\mathcal{A} .$ Then $P_{M \tilde{\oplus} N} \in \mathcal{F} $ and $[P_{M \tilde{\oplus} N}] = [P_{M}] + [P_{N}] .$
\end{lemma}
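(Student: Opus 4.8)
The plan is to produce explicitly a partial isometry realizing the Murray--von Neumann equivalence $P_M \oplus P_N$-type decomposition, and then invoke the additivity of the class in $K(\mathcal F)$. First I would record that since $M \cap N = \{0\}$ and $M + N$ is closed and complementable, Remark~\ref{r10 r01} makes $M \tilde\oplus N$ orthogonally complementable, so $P_{M\tilde\oplus N}$ is a well-defined projection in $B(\mathcal A)$. To see it lies in $\mathcal F$, I would write $P_{M\tilde\oplus N}$ as a sum built from $P_M$ and $P_N$: the skew projection $\sqcap$ onto $M$ along $N\tilde\oplus (M\tilde\oplus N)^\perp$ has closed range equal to $M$, hence by Remark~\ref{r10 r01} $\sqcap \in B(\mathcal A)$, and since $P_M\sqcap = \sqcap$ while $P_M \in \mathcal F$ and $\mathcal F$ is an ideal, $\sqcap \in \mathcal F$; symmetrically the skew projection $\sqcap'$ onto $N$ along $M \tilde\oplus (M\tilde\oplus N)^\perp$ lies in $\mathcal F$. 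Then $P_{M\tilde\oplus N} = \sqcap + \sqcap'$ as operators (both sides agree on $M$, on $N$, and on $(M\tilde\oplus N)^\perp$), so $P_{M\tilde\oplus N}\in\mathcal F$ as a sum of two elements of the ideal.

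For the class identity, the clean route is to realize $P_{M\tilde\oplus N}$ as Murray--von Neumann equivalent to $P_{M} + P_{N'}$ for a projection $N'$ with $N' \perp M$ and $[P_{N'}] = [P_N]$. Concretely, apply Proposition~\ref{r10 pr01}-style reasoning, or more directly: let $\sqcap$ be the skew projection onto $M$ along $N$ inside $M\tilde\oplus N$, extended by $0$ on the orthogonal complement. By Corollary~\ref{cpolar} the closed-range operator $P_{M\tilde\oplus N} - \sqcap$ (a skew projection onto $N$ along $M$, with range $N$) admits a polar decomposition; its partial isometry $v$ satisfies $v^*v = P_N$ and $vv^* = P_{N_1}$, where $N_1 := \operatorname{Im}(P_{M\tilde\oplus N} - \sqcap)$ restricted appropriately — but to get orthogonality to $M$ I would instead take $N_1$ to be the image of $(I - P_M)$ applied to $N$, i.e. $N_1 = (I-P_M)(N)$. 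One checks $(I-P_M)_{\vert N}$ is injective (if $(I-P_M)n = 0$ then $n \in M \cap N = \{0\}$) and has closed range (since $M\tilde\oplus N$ is closed and $M$ is complemented in it), so by Lemma~\ref{r10 l03} $N_1$ is complementable, $P_{N_1}\in\mathcal F$, and $P_N \sim P_{N_1}$ via Lemma~\ref{r10 l01}. Now $M \perp N_1$ and $M \tilde\oplus N = M \tilde\oplus N_1 = M \oplus N_1$ (genuine orthogonal sum), whence $P_{M\tilde\oplus N} = P_M + P_{N_1}$ as orthogonal projections. Passing to $S(\mathcal F)$ and then to $K(\mathcal F) = G(S(\mathcal F))$, we get $[P_{M\tilde\oplus N}] = [P_M + P_{N_1}] = [P_M] + [P_{N_1}] = [P_M] + [P_N]$, using that $\sim$ is compatible with the semigroup addition and that $[P_{N_1}] = [P_N]$ from Lemma~\ref{r10 l01}.

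The step I expect to be the main obstacle is verifying cleanly that $(I-P_M)(N)$ is closed and that $M \oplus (I-P_M)(N) = M \tilde\oplus N$ as submodules (not merely that the ranges of the projections agree). The subtlety is that in Hilbert $C^*$-modules "algebraic complement + closed" does not automatically give orthogonal complementability, so one must lean on $M$ being complemented \emph{inside} the closed module $M\tilde\oplus N$: the skew projection $\sqcap$ onto $M$ along $N$ is adjointable on $M\tilde\oplus N$ by Remark~\ref{r10 r01} applied to $M\tilde\oplus N$, hence $I - \sqcap$ has closed range $N$, and $(I-P_M)$ agrees with an isomorphism on that range, so $(I-P_M)(N)$ is closed by Corollary~\ref{cpolar} / Lemma~\ref{r10 l03}. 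Once that is in place, the orthogonal decomposition $M\tilde\oplus N = M \oplus (I-P_M)(N)$ follows because every $x = m + n$ ($m\in M$, $n\in N$) can be rewritten $x = (m + P_M n) + (I-P_M)n$ with $m + P_M n \in M$ and $(I-P_M)n \in (I-P_M)(N)$, and these two summands are orthogonal. The remaining bookkeeping — that $\mathcal F$ is an ideal so sums and the relevant skew projections stay in $\mathcal F$, and that $G$ is a functor so additivity descends — is routine.
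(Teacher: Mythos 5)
Your proposal is correct and is essentially the paper's own argument: the paper takes $R$ to be the orthogonal complement of $M$ inside $M \tilde{\oplus} N$ (obtained via \cite[Lemma 2.6]{IS3}), observes $R \cong N$ so that $P_{R} \sim P_{N}$ by Lemma \ref{r10 l01}, and concludes $P_{M \tilde{\oplus} N} = P_{M} + P_{R} \in \mathcal{F}$ with $[P_{M \tilde{\oplus} N}] = [P_{M}] + [P_{N}]$, and your $N_{1} = (I-P_{M})(N)$ is precisely this $R$, just constructed explicitly. Your separate skew-projection argument for membership in $\mathcal{F}$ and the polar-decomposition aside (where the roles of $v^{*}v$ and $vv^{*}$ are anyway swapped) are redundant once $P_{M \tilde{\oplus} N} = P_{M} + P_{N_{1}}$ is established.
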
 
 
 \begin{proof}
 	Note first that by Remark \ref{r10 r01} the submodules $M, N$ and $M \tilde{\oplus} N $ are orthogonally complementable in $ \mathcal{A} .$ Since  $M$ is orthogonally complementable and $M \subseteq M \tilde{\oplus} N ,$ by \cite[Lemma 2.6]{IS3} we have that $M$ is orthogonally complementable in $ M \tilde{\oplus} N  .$ Let $R $ be the orthogonal complement of $M$ in $M \tilde{\oplus} N .$ Then, since $M \tilde{\oplus} N= M \oplus R  ,$ it is clear that $R \cong N .$ By Lemma \ref{r10 l01} $P_{R} \sim P_{N} \in \mathcal{F} .$ Indeed, 
 	since $ M \tilde{\oplus} N $ is orthogonally complementable in $\mathcal{A} $, then $R $ is orthogonally complementable in $\mathcal{A} .$ Now, we have $P_{M \tilde{\oplus} N} = P_{M}+P_{R} \in \mathcal{F} .$ Moreover, $[ P_{M \tilde{\oplus} N} ]= [P_{M}]+[P_{R}] = [P_{M}]+[P_{N}] ,$ as $P_{R} \sim P_{N} .$ 
 \end{proof}

\begin{lemma} \label{r10 l00}
	Let $F \in B(\mathcal{A}) $ and suppose that 
	$$ \mathcal{A} = M_{1} \tilde{\oplus} N_{1}  \stackrel{F}{\longrightarrow} M_{2} \tilde{\oplus} N_{2} = \mathcal{A} $$ 
	is a decomposition with respect to which $F$
	has the matrix 
	$ 
	\begin{pmatrix}
		F_{1}  & 0 \\
		0 & F_{4}  
	\end{pmatrix} 
	$  
	where $F_{1} $ is an isomorphism. Then, with respect to the decomposition 
	$$ \mathcal{A} = N_{1}^{\perp} \oplus  N_{1}  \stackrel{F}{\longrightarrow} F(N_{1}^{\perp}) \tilde{\oplus} N_{2} = \mathcal{A} ,$$ 
	$F $ has the matrix 
		$ 
	\begin{pmatrix}
	\tilde	F_{1}  & 0 \\
		0 & F_{4}  
	\end{pmatrix} 
	$  
	where $ \tilde{F_{1}}$ is an isomorphism. 
\end{lemma}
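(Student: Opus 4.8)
The plan is to work with the two decompositions $\mathcal{A} = M_{1} \tilde{\oplus} N_{1} = N_{1}^{\perp} \oplus N_{1}$ of the domain and to understand how $F$ acts on $N_{1}^{\perp}$. First I would note that $N_{1}$, being closed and complementable, is orthogonally complementable by Remark \ref{r10 r01}, so $\mathcal{A} = N_{1}^{\perp} \oplus N_{1}$; likewise $M_{1}$ and $M_{2}$ are complementable. Fix the idempotents $E \in B(\mathcal{A})$ with $Im\, E = M_{1}$, $\ker E = N_{1}$ and $\pi \in B(\mathcal{A})$ with $Im\, \pi = M_{2}$, $\ker \pi = N_{2}$; these are bounded (hence adjointable) because the respective sums are topological direct sums. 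Since $N_{1}^{\perp}$ and $M_{1}$ are two complements of $N_{1}$, the map $E|_{N_{1}^{\perp}}\colon N_{1}^{\perp} \to M_{1}$ is a bounded bijection: it is injective since $N_{1}^{\perp} \cap \ker E = N_{1}^{\perp} \cap N_{1} = \{0\}$, and surjective since for $m \in M_{1}$, writing $m = y + n$ with $y \in N_{1}^{\perp}$, $n \in N_{1}$, one gets $Ey = m$. By the open mapping theorem $E|_{N_{1}^{\perp}}$ is an isomorphism.

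Next I would compute $F$ on $N_{1}^{\perp}$. For $x \in N_{1}^{\perp}$ write $x = Ex + (x - Ex)$ with $Ex \in M_{1}$ and $x - Ex \in N_{1}$; since $F$ is block-diagonal with respect to the given decomposition, $Fx = F_{1}(Ex) + F_{4}(x - Ex)$ with $F_{1}(Ex) \in M_{2}$, $F_{4}(x - Ex) \in N_{2}$. Applying $\pi$ gives $\pi F|_{N_{1}^{\perp}} = F_{1} \circ E|_{N_{1}^{\perp}}$, a composition of isomorphisms $N_{1}^{\perp} \stackrel{\sim}{\longrightarrow} M_{1} \stackrel{\sim}{\longrightarrow} M_{2}$, so it is an isomorphism and in particular bounded below. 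Hence $F|_{N_{1}^{\perp}}$ is bounded below, so it is injective and $F(N_{1}^{\perp}) = Im(F P_{N_{1}^{\perp}})$ is closed; by \cite[Theorem 2.3.3]{MT} this submodule is orthogonally complementable. Therefore $F|_{N_{1}^{\perp}}\colon N_{1}^{\perp} \to F(N_{1}^{\perp})$ is a bounded bijection onto a closed submodule, hence an isomorphism; set $\tilde{F_{1}} := F|_{N_{1}^{\perp}}$.

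Then I would check that $\mathcal{A} = F(N_{1}^{\perp}) \tilde{\oplus} N_{2}$. If $Fx \in N_{2}$ with $x \in N_{1}^{\perp}$, then $0 = \pi Fx = F_{1}(Ex)$, forcing $Ex = 0$ and thus $x = 0$; so $F(N_{1}^{\perp}) \cap N_{2} = \{0\}$. Given $a \in \mathcal{A}$, write $a = m_{2} + n_{2}$ with $m_{2} \in M_{2}$, $n_{2} \in N_{2}$; by surjectivity of $\pi F|_{N_{1}^{\perp}}$ pick $x \in N_{1}^{\perp}$ with $F_{1}(Ex) = m_{2}$, so that $Fx - m_{2} = F_{4}(x - Ex) \in N_{2}$ and hence $a = Fx + \bigl(n_{2} - F_{4}(x - Ex)\bigr) \in F(N_{1}^{\perp}) + N_{2}$. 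Since $F(N_{1}^{\perp})$ and $N_{2}$ are closed, complementable submodules whose algebraic sum is all of $\mathcal{A}$, the sum is a topological direct sum, i.e. $\mathcal{A} = F(N_{1}^{\perp}) \tilde{\oplus} N_{2}$.

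Finally the matrix form is pure bookkeeping: with respect to $\mathcal{A} = N_{1}^{\perp} \oplus N_{1} \stackrel{F}{\longrightarrow} F(N_{1}^{\perp}) \tilde{\oplus} N_{2}$, every $x \in N_{1}^{\perp}$ has $Fx \in F(N_{1}^{\perp})$, so the $(1,1)$-entry is $\tilde{F_{1}} = F|_{N_{1}^{\perp}}$ and the $(2,1)$-entry is $0$; every $n \in N_{1}$ has $Fn = F_{4}n \in N_{2}$, so the $(1,2)$-entry is $0$ and the $(2,2)$-entry is $F_{4}$. The main obstacle is verifying that $F(N_{1}^{\perp})$ is closed, since images of adjointable operators need not be closed; this is exactly why one routes through the factorization $\pi F|_{N_{1}^{\perp}} = F_{1} \circ E|_{N_{1}^{\perp}}$ to produce the lower bound, after which everything reduces to routine manipulation of the direct-sum decompositions.
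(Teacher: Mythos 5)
Your proof is correct, and its first half coincides with the paper's: both hinge on the factorization $\pi F|_{N_{1}^{\perp}}=F_{1}\circ E|_{N_{1}^{\perp}}$ (in the paper, $\sqcap_{M_{2}}F|_{N_{1}^{\perp}}=F\,\sqcap_{M_{1}}|_{N_{1}^{\perp}}$), which exhibits the compression of $F$ to $M_{2}$ as a composition of isomorphisms $N_{1}^{\perp}\to M_{1}\to M_{2}$. You diverge in the second half. The paper keeps the codomain decomposition $M_{2}\tilde{\oplus}N_{2}$, records that $F$ is lower triangular with invertible $(1,1)$-corner there, and then invokes the diagonalization technique of \cite[Lemma 2.7.10]{MT} to produce an isomorphism $V$ with $V(N_{2})=N_{2}$, from which $\mathcal{A}=F(N_{1}^{\perp})\tilde{\oplus}N_{2}$ follows. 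You instead argue directly: the factorization makes $F|_{N_{1}^{\perp}}$ bounded below, so $F(N_{1}^{\perp})$ is closed (and complementable via \cite[Theorem 2.3.3]{MT}), and you check by hand that $F(N_{1}^{\perp})\cap N_{2}=\{0\}$ and $F(N_{1}^{\perp})+N_{2}=\mathcal{A}$ using the block structure. Your route is more elementary and self-contained, avoiding the external diagonalization argument; its one implicit ingredient is the standard closed-graph fact that an algebraic direct-sum decomposition of $\mathcal{A}$ into two closed submodules is automatically topological, which you should state (or cite) explicitly when asserting $\mathcal{A}=F(N_{1}^{\perp})\tilde{\oplus}N_{2}$. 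The paper's route, by contrast, buys a reusable scheme (the same diagonalization is quoted repeatedly later), at the cost of leaning on \cite{MT}.
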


\begin{proof}
	Let $ \mathcal{A} = M_{1} \tilde \oplus N_{1} \stackrel{F}{\longrightarrow} M_{2} \tilde \oplus N_{2}= \mathcal{A}$ be a  decomposition with respect to which  $F$ has the matrix
		$ 
	\begin{pmatrix}
		F_{1}  & 0 \\
		0 & F_{4}  
	\end{pmatrix} 
	$  
	where $F_{1} $ is an isomorphism.Observe first that, since $N_{1} $ is orthogonally complementable by Remark \ref{r10 r01}, then 
	$$\mathcal{A} = M_{1}   \tilde \oplus N_{1} = N_{1} \oplus N_{1}^\bot ,$$ 
	so $\sqcap_{{M_{1}}_{{\mid}_{N_{1}^\bot}}} $ is an isomorphism from $N_{1}^\bot$ onto $M_{1},$ where $\sqcap_{{M_{1}}_{{\mid}_{N_{1}^\bot}}} $ stands for the projection onto $M_{1}$ along $N_{1}$ restricted to $N_{1}^\bot$. Observe next that, since $F(M_{1})=M_{2} $ and $F(N_{1})\subseteq N_{2}$, we have  ${\sqcap_{{M_{2}}}   F_{{\mid}_{N_{1}^\bot}}}=F \sqcap_{{M_{1}}_{{\mid}_{N_{1}^\bot}}},$ where $\sqcap_{{M_{2}}} $ stands for the projection onto $M_{2} $ along $N_{2}.$ Since $F_{{\mid}_{M_{1}}}$ is an isomorphism, it follows that $\sqcap_{{M_{2}}} F_{{\mid}_{N_{1}^\bot}}=F \sqcap_{{M_{1}}_{{\mid}_{N_{1}^\bot}}} $ is an isomorphism as a composition of isomorphisms. Hence, with respect to  the decomposition
	$$\mathcal{A} = {N_{1}^\bot} \oplus {N_{1}} {\stackrel{F}{\longrightarrow}}  {M_{2}} \tilde \oplus N_{2}=\mathcal{A},$$
	$F$ has the matrix
$ 
\begin{pmatrix}
	 \tilde{F_{1}}  & 0 \\
	\tilde{F_{3
	}} & F_{4}  
\end{pmatrix} 
$  
	where $\tilde F_{1}=\sqcap_{{M_{2}}} F_{{\mid}_{N_{1}^\bot}} $is an isomorphism. Using the technique of diagonalization as in the proof of \cite[Lemma 2.7.10]{MT}, we deduce that there exists an isomorphism $V$ such that 
	$$ \mathcal{A} = N_{1}^{\perp} \oplus N_{1} \stackrel{F}{\longrightarrow} V(M_{2}) \tilde \oplus V(N_{2})= \mathcal{A} $$
	is a decomposition with respect to which $F$ has the matrix  $ 
	\begin{pmatrix}
	\tilde{ \tilde{F_{1}}}  & 0 \\
		0 & F_{4}  
	\end{pmatrix} 
	$  
	where $ \tilde{ \tilde{F_{1}}}$ is an isomorphism. Moreover, by the construction of $V$ we have $V(N_{2})=N_{2}.$ Hence $$ \mathcal{A} = F ( N_{1}^{\perp}) \tilde \oplus N_{2}.$$ 
	
	Thus, with respect to the decomposition 
	$$ \mathcal{A} = N_{1}^{\perp} \oplus N_{1}  \stackrel{F}{\longrightarrow} F(N_{1}^{\perp}) \tilde{\oplus} N_{2} = \mathcal{A} ,$$ $F$ has the desired matrix.
\end{proof}
In exactly the same way we can prove the following corollary.
 
\begin{corollary} \label{extracor7}
Let $F \in B(\mathcal{A}) $ and suppose that 
$$ \mathcal{A} = M_{1} \tilde{\oplus} N_{1}  \stackrel{F}{\longrightarrow} M_{2} \tilde{\oplus} N_{2} = \mathcal{A} $$ 
is a decomposition with respect to which $F$
has the matrix 
$ 
\begin{pmatrix}
	F_{1}  & 0 \\
	0 & F_{4}  
\end{pmatrix} 
$  
where $F_{1} $ is an isomorphism. If there exists a closed submodule $ \tilde M $ of $ \mathcal{A} $ such that $ \mathcal{A} = \tilde M \tilde \oplus N_{1} ,$ then $ F $ has the matrix
$ 
\begin{pmatrix}
\tilde	F_{1}  & 0 \\
	0 & F_{4}  
\end{pmatrix} 
$ with respect to the decomposition 
$$ \mathcal{A} = \tilde M \tilde{\oplus} N_{1}  \stackrel{F}{\longrightarrow} F( \tilde M ) \tilde{\oplus} N_{2} = \mathcal{A} $$ 
where $ \tilde{F_{1}}$ is an isomorphism. 
\end{corollary}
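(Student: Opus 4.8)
The plan is to run the proof of Lemma~\ref{r10 l00} essentially word for word, with the orthogonal complement $N_{1}^{\perp}$ replaced throughout by the given complement $\tilde M$. Inspecting that proof, orthogonality of $N_{1}^{\perp}$ to $N_{1}$ is used in exactly one place: to see that the idempotent onto $M_{1}$ along $N_{1}$, restricted to $N_{1}^{\perp}$, is an isomorphism onto $M_{1}$. Everything after that step uses only the relation $\mathcal{A}=M_{1}\tilde\oplus N_{1}=N_{1}^{\perp}\oplus N_{1}$, i.e.\ that $N_{1}^{\perp}$ is a topological complement of $N_{1}$ in $\mathcal{A}$, together with the consequences $F(M_{1})=M_{2}$, $F(N_{1})\subseteq N_{2}$ of the diagonal-matrix hypothesis; and by hypothesis $\mathcal{A}=\tilde M\tilde\oplus N_{1}$, so $\tilde M$ is such a complement as well. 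Since $\mathcal{A}$ is self-dual, all bounded module maps occurring in the argument are automatically adjointable by \cite[Proposition~2.5.2]{MT}, and all submodules appearing are orthogonally complementable by Remark~\ref{r10 r01}, so there is no loss in transcribing the proof of Lemma~\ref{r10 l00} once that single step has been replaced.

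To replace that step, write $\sqcap_{M_{1}}$ for the idempotent in $B(\mathcal{A})$ with range $M_{1}$ and kernel $N_{1}$, and $\sqcap_{\tilde M}$ for the idempotent with range $\tilde M$ and kernel $N_{1}$. A direct computation, using $M_{1}\cap N_{1}=\tilde M\cap N_{1}=\{0\}$ and $M_{1}+N_{1}=\tilde M+N_{1}=\mathcal{A}$, shows that ${\sqcap_{M_{1}}}_{\vert_{\tilde M}}\colon\tilde M\to M_{1}$ and ${\sqcap_{\tilde M}}_{\vert_{M_{1}}}\colon M_{1}\to\tilde M$ are mutually inverse bounded module maps, so ${\sqcap_{M_{1}}}_{\vert_{\tilde M}}$ is an isomorphism of $\tilde M$ onto $M_{1}$. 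Exactly as in Lemma~\ref{r10 l00}, $F(M_{1})=M_{2}$ and $F(N_{1})\subseteq N_{2}$ give $\sqcap_{M_{2}}F_{\vert_{\tilde M}}=F\,{\sqcap_{M_{1}}}_{\vert_{\tilde M}}$, where $\sqcap_{M_{2}}$ is the idempotent with range $M_{2}$ and kernel $N_{2}$; since $F_{\vert_{M_{1}}}=F_{1}$ is an isomorphism of $M_{1}$ onto $M_{2}$, the map $\tilde F_{1}:=\sqcap_{M_{2}}F_{\vert_{\tilde M}}$ is an isomorphism of $\tilde M$ onto $M_{2}$ as a composition of isomorphisms. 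Hence, with respect to $\mathcal{A}=\tilde M\oplus N_{1}\stackrel{F}{\longrightarrow}M_{2}\tilde\oplus N_{2}=\mathcal{A}$, the operator $F$ has matrix $\begin{pmatrix}\tilde F_{1}&0\\ \tilde F_{3}&F_{4}\end{pmatrix}$ with $\tilde F_{1}$ an isomorphism: the $(1,2)$-entry vanishes and the $(2,2)$-entry equals $F_{4}=F_{\vert_{N_{1}}}$ because $F(N_{1})\subseteq N_{2}$, while $\tilde F_{3}=\sqcap_{N_{2}}F_{\vert_{\tilde M}}$. Finally, the diagonalization technique of \cite[Lemma~2.7.10]{MT}, applied as in the proof of Lemma~\ref{r10 l00}, produces an isomorphism $V$ of $\mathcal{A}$ such that $\mathcal{A}=\tilde M\oplus N_{1}\stackrel{F}{\longrightarrow}V(M_{2})\tilde\oplus V(N_{2})=\mathcal{A}$ is a decomposition with respect to which $F$ is diagonal with an isomorphism in the $(1,1)$-slot (which we again denote $\tilde F_{1}$) and with $(2,2)$-entry $F_{4}$, and by construction $V(N_{2})=N_{2}$. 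Since the $(1,1)$-entry is an isomorphism, $F(\tilde M)=V(M_{2})$, and since $\mathcal{A}=M_{2}\tilde\oplus N_{2}$ we obtain $\mathcal{A}=V(M_{2})\tilde\oplus V(N_{2})=F(\tilde M)\tilde\oplus N_{2}$. This is exactly the decomposition and matrix claimed in the corollary.

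I do not expect a genuine obstacle here: Lemma~\ref{r10 l00} was already established for the particular complement $N_{1}^{\perp}$ using nothing about it beyond its being a complement of $N_{1}$, so the only real content of the corollary is the elementary fact that two complements of one and the same submodule are carried onto each other by the restrictions of the corresponding idempotents. The points that require a little care are purely bookkeeping: confirming that ${\sqcap_{M_{1}}}_{\vert_{\tilde M}}$, $\tilde F_{1}$ and $V$ are isomorphisms in the relevant sense --- which is automatic, since a bounded module bijection between Hilbert modules has a bounded inverse and every bounded operator over the self-dual module $\mathcal{A}$ is adjointable --- and that every submodule in sight is orthogonally complementable, which is Remark~\ref{r10 r01}. (Alternatively, one could deduce the corollary from Lemma~\ref{r10 l00} by conjugating with the isomorphism of $\mathcal{A}$ that fixes $N_{1}$ and carries $N_{1}^{\perp}$ onto $\tilde M$ via $\sqcap_{\tilde M}$, but transcribing the proof directly is shorter.)
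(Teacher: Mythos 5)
Your proposal is correct and coincides with what the paper intends: Corollary~\ref{extracor7} is stated without a separate proof precisely because it follows by rerunning the proof of Lemma~\ref{r10 l00} with $N_{1}^{\perp}$ replaced by the given complement $\tilde M$, which is exactly your argument. Your explicit verification that ${\sqcap_{M_{1}}}_{\vert_{\tilde M}}$ and ${\sqcap_{\tilde M}}_{\vert_{M_{1}}}$ are mutually inverse supplies the one step where orthogonality might have seemed to be used, so nothing further is needed.
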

Thanks to Lemma \ref{r10 l00} we obtain the following useful characterization of invertibility up to pair of orthogonal projections.
\begin{lemma} \label{r10 l02}
	Let $ F \in \mathcal{A} .$ Then $F$ has the matrix 
	$ 
	\begin{pmatrix}
		F_{1}  & 0 \\
		0 & F_{4}  
	\end{pmatrix} 
	$  
	with respect to the decomposition 
	$$\mathcal{A} = M_{1} \tilde{\oplus} N_{1}  \stackrel{F}{\longrightarrow} M_{2} \tilde{\oplus} N_{2} = \mathcal{A}   $$ 
	where $F_{1} $ is an isomorphism if and only if $F$ is invertible up to $(P,Q)$ where $P \sim P_{N_{1}} $ and $Q \sim P_{N_{2}}.$
\end{lemma}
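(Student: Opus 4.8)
The proof uses the identification $V\colon\mathcal{A}\to B(\mathcal{A})$, $a\mapsto L_a$, to move freely between elements of $\mathcal{A}$ and operators; under it the condition of Definition \ref{d 11} becomes $L_{1-Q}L_FL_{1-P}L_b=L_{1-Q}$ and $L_bL_{1-Q}L_FL_{1-P}=L_{1-P}$. By Remark \ref{r10 r01} every closed complementable submodule of $\mathcal{A}$ is orthogonally complementable, and $P_M$ will denote the orthogonal projection onto such an $M$. The key preliminary observation is that \emph{$F$ is invertible up to $(P,Q)$ if and only if $T:=L_{1-Q}L_FL_{1-P}$ restricts to an isomorphism of $(1-P)\mathcal{A}$ onto $(1-Q)\mathcal{A}$}. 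Indeed $T$ annihilates $P\mathcal{A}$ and has range inside $(1-Q)\mathcal{A}$; the first relation gives $y=T\bigl((1-P)by\bigr)$ for every $y\in(1-Q)\mathcal{A}$, so $T|_{(1-P)\mathcal{A}}$ is surjective onto $(1-Q)\mathcal{A}$, while the second gives $L_bTx=x$ for every $x\in(1-P)\mathcal{A}$, so $T|_{(1-P)\mathcal{A}}$ is injective; it is then an isomorphism by the open mapping theorem. Conversely, if $S$ denotes the inverse isomorphism, then $b:=S\,L_{1-Q}\in B(\mathcal{A})$ satisfies both relations, as a short computation using $L_{1-P}^2=L_{1-P}$ and $L_{1-Q}^2=L_{1-Q}$ shows.

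$(\Rightarrow)$ Suppose $F$ has the matrix $\begin{pmatrix}F_1&0\\0&F_4\end{pmatrix}$ with respect to $\mathcal{A}=M_1\tilde{\oplus}N_1\stackrel{F}{\longrightarrow}M_2\tilde{\oplus}N_2=\mathcal{A}$ with $F_1$ an isomorphism; then $F|_{M_1}=F_1$ is an isomorphism of $M_1$ onto $M_2$ and $F(N_1)\subseteq N_2$. Since $M_1,M_2$ are orthogonally complementable, set $P:=1-P_{M_1}$ and $Q:=1-P_{M_2}$, so that $(1-P)\mathcal{A}=M_1$ and $(1-Q)\mathcal{A}=M_2$, and put $b:=F_1^{-1}\circ P_{M_2}\in B(\mathcal{A})$, which is bounded because $F_1$ is an isomorphism of Banach modules. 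A direct check --- using $F|_{M_1}=F_1$ and $F(M_1)=M_2$ --- yields $(1-Q)F(1-P)b=1-Q$ and $b(1-Q)F(1-P)=1-P$, so $F$ is invertible up to $(P,Q)$. Finally $\mathcal{A}=M_1\oplus M_1^{\perp}=M_1\tilde{\oplus}N_1$ forces $M_1^{\perp}\cong N_1$, hence $P=P_{M_1^{\perp}}\sim P_{N_1}$ by Lemma \ref{r10 l01}, and similarly $Q=P_{M_2^{\perp}}\sim P_{N_2}$.

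$(\Leftarrow)$ Suppose $F$ is invertible up to $(P,Q)$. By the preliminary observation, $T|_{(1-P)\mathcal{A}}\colon(1-P)\mathcal{A}\to(1-Q)\mathcal{A}$ is an isomorphism. With respect to the orthogonal decompositions $\mathcal{A}=(1-P)\mathcal{A}\oplus P\mathcal{A}\stackrel{F}{\longrightarrow}(1-Q)\mathcal{A}\oplus Q\mathcal{A}=\mathcal{A}$, the matrix of $F$ has $(1,1)$-entry $P_{(1-Q)\mathcal{A}}F|_{(1-P)\mathcal{A}}=T|_{(1-P)\mathcal{A}}$, an isomorphism, the other three entries being bounded and adjointable. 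By the diagonalization technique of \cite[Lemma 2.7.10]{MT} there exist isomorphisms $U,W$ of $\mathcal{A}$, with $U$ upper triangular and fixing $(1-P)\mathcal{A}$ and $W$ lower triangular and fixing $Q\mathcal{A}$, such that $WFU$ is diagonal with isomorphic $(1,1)$-entry. Writing $F=W^{-1}(WFU)U^{-1}$ and applying $U$, respectively $W^{-1}$, to the two orthogonal decompositions, we obtain that $F$ has the matrix $\begin{pmatrix}\tilde F_1&0\\0&\tilde F_4\end{pmatrix}$ with $\tilde F_1$ an isomorphism, with respect to
$$\mathcal{A}=(1-P)\mathcal{A}\,\tilde{\oplus}\,U(P\mathcal{A})\stackrel{F}{\longrightarrow}W^{-1}\bigl((1-Q)\mathcal{A}\bigr)\,\tilde{\oplus}\,Q\mathcal{A}=\mathcal{A}.$$
Set $N_1':=U(P\mathcal{A})$ and $N_2':=Q\mathcal{A}$; both are closed and orthogonally complementable by Remark \ref{r10 r01}. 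Since $U$ restricts to an isomorphism $P\mathcal{A}\to N_1'$ and $P$ is the orthogonal projection onto $P\mathcal{A}$, Lemma \ref{r10 l01} gives $P_{N_1'}\sim P$; and $Q$ is the orthogonal projection onto $N_2'$, so $P_{N_2'}=Q$. This is the desired decomposition.

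The step I expect to demand the most care is the passage to the new decompositions in $(\Leftarrow)$: one must track orthogonal complementability throughout and, above all, choose the diagonalizing isomorphisms so that one summand of the domain decomposition and one of the codomain decomposition are carried onto themselves (here $U$ fixing $(1-P)\mathcal{A}$ and $W^{-1}$ fixing $Q\mathcal{A}$). This is precisely what forces $N_1',N_2'$ to be Murray--von Neumann equivalent to the given $P,Q$ rather than to something assembled from them, and it is the triangular form of the diagonalizing factors in \cite[Lemma 2.7.10]{MT} that delivers it; the remaining verifications are routine computations with the projections $L_{1-P},L_{1-Q}$.
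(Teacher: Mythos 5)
Your proof is correct and follows essentially the same route as the paper: both implications rest on Lemma \ref{r10 l01} (to transfer Murray--von Neumann equivalence between projections with isomorphic ranges) together with the diagonalization technique from the proof of \cite[Lemma 2.7.10]{MT} for the converse, exactly as in the paper's argument. The only cosmetic difference is in the forward direction, where you verify invertibility up to $(1-P_{M_1},\,1-P_{M_2})$ directly by exhibiting the almost inverse $F_1^{-1}P_{M_2}$, whereas the paper first passes to the decomposition $\mathcal{A}=N_1^{\perp}\oplus N_1\stackrel{F}{\longrightarrow}F(N_1^{\perp})\tilde{\oplus}N_2$ via Lemma \ref{r10 l00} and uses the pair $(P_{N_1},P_{F(N_1^{\perp})^{\perp}})$; both choices give $P\sim P_{N_1}$ and $Q\sim P_{N_2}$.
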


\begin{proof}
 By the proof of Lemma \ref{r10 l00}, if $F$ has a decomposition  
$$ \mathcal{A} = M_{1} \tilde{\oplus} N_{1}  \stackrel{F}{\longrightarrow} M_{2} \tilde{\oplus} N_{2} = \mathcal{A} ,$$ with respect to which $F$ has the matrix 
then $ 
\begin{pmatrix}
	F_{1}  & 0 \\
	0 & F_{4}  
\end{pmatrix} 
$ where $ 	F_{1} $ is an isomorphism, then 
$F$ is invertible up to pair ( $ P_{N_{1}},  P_{ F( N_{1}^{\perp} )^{\perp}} $). However, we have 
$$  \mathcal{A} = F( N_{1}^{\perp}) \oplus F( N_{1}^{\perp} )^{\perp}  = F( N_{1}^{\perp}) \tilde{\oplus} N_{2}, $$ 
hence $N_{2} \cong F( N_{1}^{\perp} )^{\perp} .$ By Lemma \ref{r10 l01} , $P_{N_{2}} \sim  P_{ F( N_{1}^{\perp} )^{\perp}} .$ \\
Conversely, if $F$ is invertible up to pair of orthogonal  projections  $(P,Q),$ then by the proof of  \cite[Lemma 2.7.10]{MT}, $F$ has decomposition 
$$ \mathcal{A} = M_{1} \tilde{\oplus} N_{1}  \stackrel{F}{\longrightarrow} M_{2} \tilde{\oplus} N_{2} = \mathcal{A} $$ 
where $N_{1} \cong Im P $ and $N_{2} \cong Im Q .$ By Lemma \ref{r10 l01} we have that $P_{N_{1}} \sim P $ and $ P_{N_{2}} \sim Q .$
\end{proof} 
We introduce now the following definition. 
 \begin{definition}
 	Let $F \in B(\mathcal{A}) . $  We say that $ F \in \mathcal{M} \mathcal{K} \Phi (\mathcal{A} ) $ if there exists a decomposition 
 	$$ \mathcal{A} = M_{1} \tilde{\oplus} N_{1}  \stackrel{F}{\longrightarrow} M_{2} \tilde{\oplus} N_{2} = \mathcal{A} $$ 
 	with respect to which $F$
 	has the matrix 
 	$ 
 	\begin{pmatrix}
 		F_{1}  & 0 \\
 		0 & F_{4}  
 	\end{pmatrix} 
 	$  
 	where $F_{1} $ is an isomorphism and $ P_{N_{1} }, P_{N_{2} } \in \mathcal{F} .$ We put then 
 	$$ index F = [P_{N_{1} } ] - [ P_{N_{2} } ]$$ in $ K(\mathcal{F} ) .$
 \end{definition}
Notice that since $ N_{1}  $ and $ N_{2}  $ are closed and complementable, by Remark \ref{r10 r01} they are orthogonally complementable, hence $P_{N_{1} } $ and $P_{N_{2} }$ are well defined. It remains to prove that the index is well defined. 
\begin{theorem} \label{indextheorem}
	The index is well defined.
\end{theorem}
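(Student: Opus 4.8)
The plan is to take two decompositions witnessing that $F\in\mathcal{M}\,\mathcal{K}\,\Phi(\mathcal{A})$,
$$ \mathcal{A} = M_{1} \tilde{\oplus} N_{1} \stackrel{F}{\longrightarrow} M_{2} \tilde{\oplus} N_{2} = \mathcal{A}, \qquad \mathcal{A} = M_{1}' \tilde{\oplus} N_{1}' \stackrel{F}{\longrightarrow} M_{2}' \tilde{\oplus} N_{2}' = \mathcal{A}, $$
each block diagonal with the first block an isomorphism and with $P_{N_{1}},P_{N_{2}},P_{N_{1}'},P_{N_{2}'}\in\mathcal{F}$, and to prove $[P_{N_{1}}]-[P_{N_{2}}]=[P_{N_{1}'}]-[P_{N_{2}'}]$ in $K(\mathcal{F})$. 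The strategy, in the spirit of Mishchenko--Fomenko, is to carve out of both decompositions one and the same ``co-finite'' submodule $L$ together with its image $F(L)$, so that what is left over in each case becomes a complement of $L$ (resp.\ of $F(L)$); two such complements are isomorphic, which forces the classes to agree.

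First I would normalize both decompositions. Using that $P_{\alpha}a\to a$ for $a\in\mathcal{F}$, choose $\epsilon$ so large that $\parallel P_{N_{1}}-P_{\epsilon}P_{N_{1}}\parallel<1$ and $\parallel P_{N_{1}'}-P_{\epsilon}P_{N_{1}'}\parallel<1$. Running the argument from the proof of Proposition \ref{r10 pr01} with $\alpha_{0}=\epsilon$, once for $N_{1}$ and once for $N_{1}'$, and then applying Corollary \ref{extracor7}, I may replace $M_{1}$ and $M_{1}'$ by new submodules (still denoted $M_{1},M_{1}'$) for which $L:=Im(I-P_{\epsilon})\subseteq M_{1}\cap M_{1}'$, while $N_{1},N_{2},N_{1}',N_{2}'$ — and hence both indices — are unchanged. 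Here $\mathcal{A}=L\oplus P_{\epsilon}\mathcal{A}$ orthogonally, $P_{L}=I-P_{\epsilon}$, and $P_{\epsilon}$ is a projection lying in $\mathcal{F}$.

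Next I would split off $L$ and $F(L)$ from the first decomposition. Since $L$ is orthogonally complementable in $\mathcal{A}$ and $L\subseteq M_{1}$, by \cite[Lemma 2.6]{IS3} it is orthogonally complementable in $M_{1}$; write $M_{1}=L\oplus R_{1}$. Then $P_{R_{1}}=P_{M_{1}}-P_{L}=P_{\epsilon}-P_{M_{1}^{\perp}}$, and as $M_{1}^{\perp}\cong N_{1}$ (two complements of $M_{1}$), Lemma \ref{r10 l01} gives $P_{M_{1}^{\perp}}\sim P_{N_{1}}\in\mathcal{F}$, whence $P_{R_{1}}\in\mathcal{F}$. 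Forming the bounded idempotent $E:=P_{R_{1}}\Pi_{1}+(I-\Pi_{1})$, where $\Pi_{1}$ is the skew projection onto $M_{1}$ along $N_{1}$, one checks that $W_{1}:=R_{1}+N_{1}$ is closed and complementable with $\mathcal{A}=L\,\tilde{\oplus}\,W_{1}$; Lemma \ref{r10 l04} then yields $P_{W_{1}}\in\mathcal{F}$ and $[P_{W_{1}}]=[P_{R_{1}}]+[P_{N_{1}}]$. Since the restriction of $F$ to $M_{1}$ is an isomorphism, $M_{2}=F(M_{1})=F(L)\,\tilde{\oplus}\,F(R_{1})$; by Lemma \ref{r10 l03}, $P_{F(R_{1})}\in\mathcal{F}$ and $P_{F(R_{1})}\sim P_{R_{1}}$, and the same idempotent construction shows $W_{2}:=F(R_{1})+N_{2}$ is closed and complementable with $\mathcal{A}=F(L)\,\tilde{\oplus}\,W_{2}$, $P_{W_{2}}\in\mathcal{F}$, and $[P_{W_{2}}]=[P_{R_{1}}]+[P_{N_{2}}]$. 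Hence $[P_{W_{1}}]-[P_{W_{2}}]=[P_{N_{1}}]-[P_{N_{2}}]$. Carrying out the identical construction on the second decomposition produces $W_{1}',W_{2}'$ with $\mathcal{A}=L\,\tilde{\oplus}\,W_{1}'$, $\mathcal{A}=F(L)\,\tilde{\oplus}\,W_{2}'$, $P_{W_{1}'},P_{W_{2}'}\in\mathcal{F}$, and $[P_{W_{1}'}]-[P_{W_{2}'}]=[P_{N_{1}'}]-[P_{N_{2}'}]$; crucially, $L$ and $F(L)$ appearing here are literally the same submodules as before.

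Finally, $\mathcal{A}=L\,\tilde{\oplus}\,W_{1}=L\,\tilde{\oplus}\,W_{1}'$ exhibits $W_{1}$ and $W_{1}'$ as complements of $L$, so the projection onto $W_{1}'$ along $L$ restricts to an isomorphism $W_{1}\to W_{1}'$; by Lemma \ref{r10 l01}, $[P_{W_{1}}]=[P_{W_{1}'}]$. Likewise $W_{2}\cong W_{2}'$ (both complements of $F(L)$), so $[P_{W_{2}}]=[P_{W_{2}'}]$. Combining, $[P_{N_{1}}]-[P_{N_{2}}]=[P_{W_{1}}]-[P_{W_{2}}]=[P_{W_{1}'}]-[P_{W_{2}'}]=[P_{N_{1}'}]-[P_{N_{2}'}]$, which proves the theorem. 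I expect the main obstacle to be the third paragraph: verifying that the purely algebraic sums $R_{1}+N_{1}$ and $F(R_{1})+N_{2}$ (and their primed versions) are genuine closed, complementable direct summands with projections in $\mathcal{F}$, so that Lemma \ref{r10 l04} applies. Once this bookkeeping is in place the rest is formal, the decisive point being simply that the leftover finite pieces $W_{i}$ and $W_{i}'$ complement one and the same submodule.
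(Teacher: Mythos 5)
Your proposal is correct and takes essentially the same route as the paper: normalize both decompositions via Proposition \ref{r10 pr01} and Corollary \ref{extracor7} so that $L=Im(I-P_{\epsilon})$ lies in both first summands, split off $L$ and $F(L)$, compute the classes of the finite leftovers with Lemmas \ref{r10 l01}, \ref{r10 l03} and \ref{r10 l04}, and identify those leftovers as complements of one and the same submodule (the paper merely routes the comparison of $\mathcal{R}\tilde{\oplus}N_{1}$ and $\mathcal{R}'\tilde{\oplus}N_{1}'$ through $Im\,P_{\alpha}$, a cosmetic difference). One microscopic remark: on the target side the kernel of your idempotent $P_{F(R_{1})}\Pi_{2}+(I-\Pi_{2})$ is $M_{2}\cap F(R_{1})^{\perp}$ rather than $F(L)$, but the needed decomposition $\mathcal{A}=F(L)\tilde{\oplus}F(R_{1})\tilde{\oplus}N_{2}$ follows anyway from $M_{2}=F(L)\tilde{\oplus}F(R_{1})$ because $F\vert_{M_{1}}$ is an isomorphism, so your argument is unaffected.
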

\begin{proof}
	Let 
	$$ \mathcal{A} = M_{1} \tilde{\oplus} N_{1}  \stackrel{F}{\longrightarrow} M_{2} \tilde{\oplus} N_{2} = \mathcal{A} $$  
	$$ \mathcal{A} = M_{1}^{\prime} \tilde{\oplus} N_{1}^{\prime}  \stackrel{F}{\longrightarrow} M_{2}^{\prime} \tilde{\oplus} N_{2}^{\prime} = \mathcal{A} $$

	be two $ \mathcal{M} \mathcal{K} \Phi$ -decompositions for $F.$ By Proposition \ref{r10 pr01} there exist closed submodules $\tilde{M_{1}}, \tilde{M_{1}}^{\prime} $ of $\mathcal{A} $ such that 
	$ \mathcal{A} = \tilde{M_{1}} \tilde{\oplus} N_{1}  = \tilde{M_{1}}^{\prime} \tilde{\oplus} N_{1}^{\prime}$	
	and $Im (I-P_{\alpha}) \subseteq  \tilde{M_{1}} \cap  \tilde{M_{1}}^{\prime} $ for sufficiently large $\alpha .$ 
	By Corollary \ref{extracor7} the operator $F$ has the matrices 
		$ 
	\begin{pmatrix}
		F_{1}  & 0 \\
		0 & F_{4}  
	\end{pmatrix} 
	,$  
	$ 
	\begin{pmatrix}
		F_{1}  & 0 \\
		0 & F_{4}  
	\end{pmatrix} 
	,$ 
	with respect to the decompositions 
	$$ \tilde{M_{1}} \tilde{\oplus} N_{1}  \stackrel{F}{\longrightarrow} F(\tilde{M_{1}}) \tilde{\oplus} N_{2} ,$$ 
	$$\tilde{M_{1}}^{\prime} \tilde{\oplus} N_{1}^{\prime}  \stackrel{F}{\longrightarrow} F(\tilde{M_{1}}^{\prime}) \tilde{\oplus} N_{2}^{\prime} ,$$
	respectively, where $F_{1} $ and $F_{1}^{\prime} $ are isomorphisms.\\
	Now, since $ Im(I-P_{\alpha}) \subseteq \tilde{M_{1}} \cap \tilde{M_{1}}^{\prime},$ by  \cite[Lemma 2.6]{IS3} there exist closed submodules $\mathcal{R}$ and $\mathcal{R}^{\prime} $ of $\mathcal{A}$ such that 
	$ \tilde{M_{1}} =  Im(I-P_{\alpha}) \oplus \mathcal{R} $ and $ \tilde{M_{1}}^{\prime} =  Im(I-P_{\alpha}) \oplus \mathcal{R}^{\prime} .$ As in the proof of \cite[Lemma 2.7.13]{MT} we obtain new $ \mathcal{M} \mathcal{K} \Phi$ - decompositions  
	$$ \mathcal{A}= Im(I-P_{\alpha}) \tilde{\oplus} \mathcal{R} \tilde \oplus N_{1} \stackrel{F}{\longrightarrow} F(Im(I-P_{\alpha})) \tilde{\oplus} F( \mathcal{R} )\tilde \oplus N_{2} = \mathcal{A} ,$$
	$$ \mathcal{A} =  Im(I-P_{\alpha}) \tilde{\oplus} \mathcal{R}^{\prime}  \tilde{\oplus} N_{1}^{\prime}  \stackrel{F}{\longrightarrow}  F(Im(I-P_{\alpha})) \tilde{\oplus} F(\mathcal{R}^{\prime} ) \tilde{\oplus} N_{2}^{\prime} = \mathcal{A} $$
	for the operator $ F.$
	Indeed, since 
	$$ \mathcal{A} =\tilde{M_{1}} \tilde{\oplus} N_{1}  = Im(I-P_{\alpha}) \tilde{\oplus} \mathcal{R} \tilde{\oplus} N_{1} =Im(I-P_{\alpha})  \oplus Im \text{ }P_{\alpha}, $$ 
	we have that $ \mathcal{R} \tilde{\oplus} N_{1} \cong Im \text{ }P_{\alpha}.$ Hence $P_{\mathcal{R} \tilde{\oplus} N_{1}} \sim P_{\alpha}$ by Lemma \ref{r10 l01}, so $P_{\mathcal{R} \tilde{\oplus} N_{1}} \in \mathcal{F} .$ Since $P_{\mathcal{R}} \leq  P_{\mathcal{R} \tilde{\oplus} N_{1}} ,$ it follows that $ P_{\mathcal{R}} \in \mathcal{F} $ (as 
	$ P_\mathcal{R} =P_\mathcal{R} P_{\mathcal{R} \tilde{\oplus} N_{1}} $ and $\mathcal{F} $ is an ideal). 
	Similarly,  $P_{\mathcal{R}^{\prime} \tilde{\oplus} N_{1}^{\prime}} \sim P_{\alpha} $ and thus  $P_{\mathcal{R}^{\prime} },P_{\mathcal{R}^{\prime} \tilde{\oplus} N_{1}^{\prime}} \in \mathcal{F} .$ Then, by Lemma \ref{r10 l03}, as $F_{\vert_{\mathcal{R}}} $ and $F_{\vert_{\mathcal{R^{\prime}}}} $ are isomorphisms, we get that $P_{F({\mathcal{R})}}, P_{F({\mathcal{R^{\prime}})}} \in \mathcal{F} .$ Hence, by Lemma \ref{r10 l04} we obtain that $P_{ F( \mathcal{R} )\tilde{\oplus} N_{2} }  ,$ $P_{ F( \mathcal{R} )\tilde{\oplus} N_{2}^{\prime}}  \in \mathcal{F}$   
	and 
	$ [P_{F(\mathcal{R}) \tilde{\oplus} N_{2}}] = [P_{F(\mathcal{R})}] +[P_{N_{2}}] ,$
	 $[P_{F{(\mathcal{R}^{\prime})} \tilde{\oplus} N_{2}^{\prime}}] = [P_{ F (\mathcal{R^{\prime}} )}] + [P_{N_{2}^{\prime}}].$ \\
	Next, since $P_{\mathcal{R}} \sim P_{F(\mathcal{R})}$ and $P_{\mathcal{R}^{\prime}} \sim P_{F(\mathcal{R}^{\prime})}$ 
	we have that $[P_{F(\mathcal{R})}] = [P_{\mathcal{R}}] $ and $ [ P_{F(\mathcal{R}^{\prime})} ] = [P_{\mathcal{R}^{\prime}}] .$ By Lemma \ref{r10 l04} we also have 
	$[P_{\alpha}] = [P_{\mathcal{R} \tilde{\oplus} N_{1}}] = [P_{\mathcal{R}}] + [P_{N_{1}}]$ 
	and 
	$[P_{\alpha}] = [P_{ \mathcal{R}^{\prime} \tilde{\oplus} N_{1}^{\prime} }] = [P_{ \mathcal{R}^{\prime} }] + [P_{ N_{1}^{\prime} }] .$ 
 	
	On the other hand, since 
	$$\mathcal{A} = F ( Im \text{ } P_{\alpha} ) \tilde{\oplus} F(\mathcal{R}) \tilde{\oplus} N_{2} = F (  Im \text{ } P_{\alpha} )  \tilde{\oplus} F( \mathcal{R}^{\prime}) \tilde{\oplus}  N_{2}^{\prime} ,$$ 
	we have that 
	$ F( \mathcal{R}) \tilde{\oplus}  N_{2} \cong  F( \mathcal{R}^{\prime}) \tilde{\oplus}  N_{2}^{\prime}   ,$ 
	hence, by Lemma \ref{r10 l01} and Lemma \ref{r10 l04} we get that 
	$[ P_{F (R)} ] + [ P_{N_{2}} ] = [ P_{F (R^{\prime})} ] + [ P_{N_{2}^{\prime}} ] .$ 
		
	Putting all this together, we obtain that 
	$$[ P_{ \mathcal{R} \tilde{\oplus} N_{1} } ] - [ P_{ F( \mathcal{R} ) \tilde{\oplus} N_{2} } ] =
	[ P_{ \mathcal{R}^{\prime} \tilde{\oplus} N_{1}^{\prime} } ] - [ P_{ F( \mathcal{R}^{\prime} ) \tilde{\oplus} N_{2}^{\prime} } ]   ,$$ 
	however, 
	$$[ P_{ \mathcal{R} \tilde{\oplus} N_{1} } ] - [ P_{ F( \mathcal{R} ) \tilde{\oplus} N_{2} } ] = [P_{\mathcal{R}}] + [P_{N_{1}}] - [ P_{ F( \mathcal{R} )}] - [P_{N_{2}}] = [P_{N_{1}}] -  [P_{N_{2}}]$$
	and similarly 
	$$[ P_{ \mathcal{R}^{\prime} \tilde{\oplus} N_{1}^{\prime} } ] -  [ P_{ F( \mathcal{R}^{\prime} ) \tilde{\oplus} N_{2}^{\prime}}] =  [P_{N_{1}^{\prime}}] -  [P_{N_{2}^{\prime}}] .$$  
\end{proof} 	
	
	Thanks to Lemma \ref{r10 l01}, we can prove the next result in a similar way as  \cite[Lemma 2.7.10]{MT} . For the convenience of readers, we give the full proof here. 
	
	\begin{proposition} \label{indexproposition}
		Let $ F,D \in \mathcal{M} \mathcal{K} \Phi ( \mathcal{A} ). $ Then $ DF \in \mathcal{M} \mathcal{K} \Phi ( \mathcal{A} ) $ and 
		$$ \text {\rm index } DF=\text {\rm index } D+\text {\rm index } F.$$  
	\end{proposition}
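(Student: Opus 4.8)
The plan is to adapt the proof of the corresponding fact for the standard Hilbert module (\cite[Lemma 2.7.10]{MT}), replacing the module-isomorphism bookkeeping there by Lemma \ref{r10 l01}. Fix $\mathcal{M}\mathcal{K}\Phi$-decompositions
$$ \mathcal{A} = M_{1} \tilde{\oplus} N_{1} \stackrel{F}{\longrightarrow} M_{2} \tilde{\oplus} N_{2} = \mathcal{A}, \qquad \mathcal{A} = M_{1}^{\prime} \tilde{\oplus} N_{1}^{\prime} \stackrel{D}{\longrightarrow} M_{2}^{\prime} \tilde{\oplus} N_{2}^{\prime} = \mathcal{A}, $$
with $F_{\vert_{M_{1}}}$, $D_{\vert_{M_{1}^{\prime}}}$ isomorphisms and $P_{N_{1}}, P_{N_{2}}, P_{N_{1}^{\prime}}, P_{N_{2}^{\prime}} \in \mathcal{F}$. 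By Corollary \ref{extracor7} we may replace $M_{1}$ by an arbitrary complement of $N_{1}$ and $M_{1}^{\prime}$ by an arbitrary complement of $N_{1}^{\prime}$ (this leaves $N_{1},N_{2},N_{1}^{\prime},N_{2}^{\prime}$, and hence $\operatorname{index} F$ and $\operatorname{index} D$, unchanged); using Proposition \ref{r10 pr01}, \cite[Lemma 2.6]{IS3} and Corollary \ref{extracor7} I claim these complements can be chosen so that $F(M_{1})$ and $M_{1}^{\prime}$ contain a common submodule $L$ with $\mathcal{A} = L \tilde{\oplus} W$ for some $W$ with $P_{W} \in \mathcal{F}$. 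Splitting off $L$ via \cite[Lemma 2.6]{IS3} write $F(M_{1}) = L \tilde{\oplus} R$ and $M_{1}^{\prime} = L \tilde{\oplus} R^{\prime}$; since $L$ is co-finite and $\mathcal{F}$ is an ideal, $P_{R}, P_{R^{\prime}} \in \mathcal{F}$ (compare with $W$ through Lemma \ref{r10 l01} and Lemma \ref{r10 l04}). Let $\hat{M}_{1} \subseteq M_{1}$ be the $F_{\vert_{M_{1}}}$-preimage of $L$ and $K \subseteq M_{1}$ the preimage of $R$, so $M_{1} = \hat{M}_{1} \tilde{\oplus} K$ and hence $\mathcal{A} = \hat{M}_{1} \tilde{\oplus} \hat{N}_{1}$ with $\hat{N}_{1} := K \tilde{\oplus} N_{1}$; put also $\hat{M}_{2} := D(L)$ and $\hat{N}_{2} := D(R^{\prime}) \tilde{\oplus} N_{2}^{\prime}$, so that $\mathcal{A} = D(M_{1}^{\prime}) \tilde{\oplus} N_{2}^{\prime} = \hat{M}_{2} \tilde{\oplus} \hat{N}_{2}$. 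By Lemma \ref{r10 l01}, Lemma \ref{r10 l03} and Lemma \ref{r10 l04} one has $P_{\hat{N}_{1}}, P_{\hat{N}_{2}} \in \mathcal{F}$ together with $[P_{\hat{N}_{1}}] = [P_{R}] + [P_{N_{1}}]$ and $[P_{\hat{N}_{2}}] = [P_{R^{\prime}}] + [P_{N_{2}^{\prime}}]$ (using $K \cong R$ and $D(R^{\prime}) \cong R^{\prime}$).

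Now $DF$ maps $\hat{M}_{1}$ isomorphically onto $\hat{M}_{2}$ --- it is the composite of the isomorphism $F_{\vert_{\hat{M}_{1}}} \colon \hat{M}_{1} \to L$ with the isomorphism $D_{\vert_{L}} \colon L \to D(L)$, the latter because $L \subseteq M_{1}^{\prime}$ and $D_{\vert_{M_{1}^{\prime}}}$ is an isomorphism --- so with respect to the decomposition $\mathcal{A} = \hat{M}_{1} \tilde{\oplus} \hat{N}_{1} \stackrel{DF}{\longrightarrow} \hat{M}_{2} \tilde{\oplus} \hat{N}_{2} = \mathcal{A}$ the matrix of $DF$ is upper triangular with invertible $(1,1)$-entry. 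Diagonalizing this matrix exactly as in the proof of \cite[Lemma 2.7.10]{MT} (which only alters the decomposition by an automorphism of $\mathcal{A}$, hence preserves Murray--von Neumann classes by Lemma \ref{r10 l01}) produces an $\mathcal{M}\mathcal{K}\Phi$-decomposition for $DF$; thus $DF \in \mathcal{M}\mathcal{K}\Phi(\mathcal{A})$ and
$$ \operatorname{index} DF = [P_{\hat{N}_{1}}] - [P_{\hat{N}_{2}}] = [P_{N_{1}}] + [P_{R}] - [P_{R^{\prime}}] - [P_{N_{2}^{\prime}}]. $$
Finally, $\mathcal{A} = L \tilde{\oplus} R \tilde{\oplus} N_{2} = L \tilde{\oplus} R^{\prime} \tilde{\oplus} N_{1}^{\prime}$ exhibits $R \tilde{\oplus} N_{2}$ and $R^{\prime} \tilde{\oplus} N_{1}^{\prime}$ as complements of $L$ in $\mathcal{A}$, hence isomorphic; by Lemma \ref{r10 l01} and Lemma \ref{r10 l04}, $[P_{R}] + [P_{N_{2}}] = [P_{R^{\prime}}] + [P_{N_{1}^{\prime}}]$, i.e.\ $[P_{R}] - [P_{R^{\prime}}] = [P_{N_{1}^{\prime}}] - [P_{N_{2}}]$. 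Substituting gives
$$ \operatorname{index} DF = \big([P_{N_{1}}] - [P_{N_{2}}]\big) + \big([P_{N_{1}^{\prime}}] - [P_{N_{2}^{\prime}}]\big) = \operatorname{index} F + \operatorname{index} D. $$

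The step I expect to be the main obstacle is the claimed choice of complements for which $F(M_{1})$ and $M_{1}^{\prime}$ share a co-finite submodule $L$. In the standard Hilbert module this is immediate because one can freely transport complements of finitely generated projective submodules, using $H_{\mathcal{A}} \cong H_{\mathcal{A}} \oplus H_{\mathcal{A}}$; for $\mathcal{A}$ regarded as a module over itself such transport is not available --- a co-finite submodule need not even contain a tail $Im(I - P_{\alpha})$ --- so one must build $L$ by combining Proposition \ref{r10 pr01}, \cite[Lemma 2.6]{IS3}, Corollary \ref{extracor7} and the diagonalization of \cite[Lemma 2.7.10]{MT}, possibly after first passing to refined decompositions of $F$ and $D$, and must verify at each stage (as in the earlier proofs, and because $\mathcal{F}$ is an ideal) that the auxiliary projections produced remain in $\mathcal{F}$. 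Once this alignment is secured, the remainder is the routine transcription above.
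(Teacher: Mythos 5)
Your index bookkeeping at the end (splitting $F(M_1)=L\tilde\oplus R$, $M_1'=L\tilde\oplus R'$, the upper triangular matrix for $DF$ with invertible corner, the diagonalization as in \cite[Lemma 2.7.10]{MT}, and the identity $[P_R]+[P_{N_2}]=[P_{R'}]+[P_{N_1'}]$ coming from the two complements of $L$) is sound and is essentially the same computation the paper performs. But the step you yourself single out --- that after re-choosing complements one can find a common submodule $L\subseteq F(M_1)\cap M_1'$ with $\mathcal{A}=L\tilde\oplus W$, $P_W\in\mathcal{F}$ --- is the actual content of the proof, and it is not established by the tools you invoke; worse, the mechanism you propose cannot deliver it. Corollary \ref{extracor7} only lets you vary the \emph{domain} summand $M_1$ among complements of $N_1$, and this gives no control over the image-side complement $F(M_1)$ of $N_2$: for instance, if $F$ vanishes on $N_1$ (only $F_1$ is required to be an isomorphism), then $F(\tilde M_1)=M_2$ for \emph{every} complement $\tilde M_1$ of $N_1$, so the image summand is rigid, and $M_2$ need not contain any tail $Im(I-P_\alpha)$ nor meet a chosen complement of $N_1'$ in a closed, complementable, co-finite submodule. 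Since intersections of complementable co-finite submodules are not manageable here (this is exactly why Proposition \ref{r10 pr01} produces \emph{some} complement rather than improving a given one), "passing to refined decompositions" does not close this gap.

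The paper's way around it, absent from your sketch, is to change the \emph{operators} rather than only the decompositions. Applying Proposition \ref{r10 pr01} to $N_2$ gives a complement $\tilde M\supseteq Im(I-P_{\alpha_0})$ of $N_2$; with $\sqcap$ the projection onto $\tilde M$ along $N_2$, the operator $V$ with matrix
$\begin{pmatrix} \sqcap & 0 \\ 0 & 1 \end{pmatrix}$
with respect to $\mathcal{A}=M_2\tilde\oplus N_2\longrightarrow \tilde M\tilde\oplus N_2=\mathcal{A}$ is an isomorphism, and one writes $DF=(DV^{-1})(VF)$. Now $VF$ has an $\mathcal{M}\mathcal{K}\Phi$-decomposition whose image summand is $\tilde M$, which contains the tail; $DV^{-1}$ has domain decomposition $V(M_1')\tilde\oplus V(N_1')$ with $P_{V(N_1')}\in\mathcal{F}$ by Lemma \ref{r10 l03}, and a second application of Proposition \ref{r10 pr01} with $\alpha_1\ge\alpha_0$, combined with Corollary \ref{extracor7}, replaces its domain summand by one containing $Im(I-P_{\alpha_1})$. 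Both factors now share the common co-finite piece $Im(I-P_{\alpha_1})$, and since $\operatorname{index}VF=\operatorname{index}F$ and $\operatorname{index}DV^{-1}=\operatorname{index}D$ (Lemmas \ref{r10 l01} and \ref{r10 l03}), your computation then goes through verbatim with $L=Im(I-P_{\alpha_1})$. Without this composition-with-$V$ step, or an actual proof of your alignment claim by other means, the argument is incomplete at its central point.
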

	\begin{proof}  Let 
	$$ \mathcal{A} = M_{1} \tilde{\oplus} N_{1}  \stackrel{F}{\longrightarrow} M_{2} \tilde{\oplus} N_{2} = \mathcal{A} $$
	be an $\mathcal{M} \mathcal{K} \Phi$-decomposition for $F.$ By Proposition \ref{r10 pr01} there exists some $\alpha_{0} $ and a closed submodule $\tilde{M} $ such that $Im (I-P_{\alpha_{0}}) \subseteq \tilde{M} $ and $ \mathcal{A} = \tilde{M} \tilde{\oplus} N_{2}.$ If $ \sqcap $ denotes the projection onto $ \tilde{M} $ along $ N_{2} ,$ then $\sqcap_{\vert_{M_{2}}} $ is an isomorphism onto $\tilde{M} .$ Let $V$ be the operator with the matrix 
	$ 
	\begin{pmatrix}
		\sqcap  & 0 \\
		0 & 1  
	\end{pmatrix} 
	$  
	with respect to the decomposition 
	$$ \mathcal{A}=  M_{2} \tilde{\oplus} N_{2}  \stackrel{V}{\longrightarrow} \tilde{M} \tilde{\oplus} N_{2} = \mathcal{A} .$$ 
	Then $V$ is an isomorphism on $\mathcal{A} ,$ and with respect to the decomposition 
	$$ \mathcal{A} = M_{1} \tilde{\oplus} N_{1}  {\longrightarrow} \tilde{M} \tilde{\oplus} N_{2} = \mathcal{A} ,$$ 
	the operator $VF$ has the matrix 
	$ 
	\begin{pmatrix}
		(VF)_{1}  & 0 \\
		0 & (VF)_{4} 
	\end{pmatrix} 
	$  
	where $(VF)_{1} $ is an isomorphism. Hence, $ \text {\rm index } VF=\text {\rm index } F .$ \\
	Note that if 
	$$ \mathcal{A} = M_{1}^{\prime} \tilde{\oplus} N_{1}^{\prime}  \stackrel{D}{\longrightarrow} M_{2}^{\prime} \tilde{\oplus} N_{2}^{\prime} = \mathcal{A} $$ 
	is an $\mathcal{M}  \mathcal{K} \Phi$-decomposition for $D,$ then 
	$$ \mathcal{A} = V ( M_{1}^{\prime} ) \tilde{\oplus} V ( N_{1}^{\prime} )  {\longrightarrow} M_{2}^{\prime} \tilde{\oplus} N_{2}^{\prime} = \mathcal{A} $$  
	an $\mathcal{M}  \mathcal{K} \Phi $-decomposition for $DV^{-1} $ and $index  DV^{-1} = index D .$ This follows from Lemma \ref{r10 l03} since $ V ( N_{1}^{\prime} ) \cong N_{1}^{\prime}  ,$ hence $ P_{ V ( N_{1}^{\prime} ) } \sim P_{ N_{1}^{\prime} } \in \mathcal{F}.$ Now, since $P_{ V ( N_{1}^{\prime} ) } \in \mathcal{F} ,$ by Proposition \ref{r10 pr01} we can find some $ \alpha_{1} \geq \alpha_{0}$ and a closed
	submodule $ \tilde M^{\prime} $ such that $\mathcal{A} = \tilde M^{\prime}  \tilde{\oplus} V ( N_{1}^{\prime} )  $ and $ Im (I-P_{\alpha_{1}}) \subseteq \tilde  M^{\prime}.$ Then, by Corollary \ref{extracor7}, the decomposition $$\mathcal{A} = \tilde M^{\prime}  \tilde{\oplus} V ( N_{1}^{\prime} ) {\longrightarrow} DV^{-1} (\tilde M^{\prime}) \tilde{\oplus}  N_{2}^{\prime} = \mathcal{A}  $$ is also an $\mathcal{M}  \mathcal{K} \Phi $-decomposition for $DV^{-1} .$ Moreover, $ Im (I-P_{\alpha_{1}}) \subseteq \tilde M \cap \tilde M^{\prime} .$ By \cite[Lemma 2.6]{IS3} there exist closed submodules $R, R^{ \prime} \subseteq \mathcal{A} $ such that $$ \tilde M = Im (I-P_{\alpha_{1}}) \oplus R , \tilde M^{ \prime} = Im (I-P_{\alpha_{1}}) \oplus R^{ \prime} .$$

	As in  the first part of the proof of Theorem \ref{indextheorem}, we obtain $\mathcal{M}  \mathcal{K} \Phi $-decompositions  $$\mathcal{A} = (VF)_{1} ^{ -1} (Im (I-P_{\alpha_{1}})) \tilde{\oplus} ((VF)_{1} ^{ -1} (R) \tilde{\oplus}  N_{1} ) \stackrel{VF}{\longrightarrow}  Im (I-P_{\alpha_{1}}) \tilde{\oplus} (R\tilde{\oplus} N_{2}) = \mathcal{A} ,$$ $$\mathcal{A} = Im (I-P_{\alpha_{1}}) \tilde{\oplus} ( R ^{\prime} \tilde{\oplus} V (N_{1} ^{\prime} ) ) \stackrel{DV^{-1}}{\longrightarrow} DV^{-1} (Im (I-P_{\alpha_{1}}))  \tilde{\oplus} (DV^{-1} (R ^{\prime})\tilde{\oplus}  N_{2}^{\prime}   ) =  \mathcal{A} $$ for the operators $VF$ and $DV^{-1} ,$ respectively, where $ (VF)_{1} ^{ -1} (R) \cong R , \text{ } R ^{\prime} \cong DV^{-1} (R ^{\prime}) .$  Finally, since $$\mathcal{A} =Im (I-P_{\alpha_{1}}) \tilde{\oplus} R \tilde{\oplus}  N_{2} = Im (I-P_{\alpha_{1}}) \tilde{\oplus} R ^{\prime} \tilde{\oplus} V (N_{1} ^{\prime} )  ,$$ we get that $ R \tilde{\oplus}  N_{2} \cong R ^{\prime} \tilde{\oplus} V (N_{1} ^{\prime} ) .$ By Lemma \ref{r10 l01} and Lemma \ref{r10 l04} we deduce that index $D +$ index $ F$=index $ DV^{-1} +$ index $VF = [P_{ R ^{\prime} \tilde{\oplus} V (N_{1} ^{\prime} )}] - [ P_{DV^{-1} (R ^{\prime})\tilde{\oplus}  N_{2}^{\prime}  } ] + [ P_{VF_{1} ^{ -1} (R) \tilde{\oplus}  N_{1} } ] - [P_ { R\tilde{\oplus} N_{2}}] = [ P_{VF_{1} ^{ -1} (R) \tilde{\oplus}  N_{1} } ]  - [ P_{DV^{-1} (R ^{\prime})\tilde{\oplus}  N_{2}^{\prime}  } ] .$\\
	 On the other hand, it is clear that the operator $ DF$ has the matrix $ 
	 \begin{pmatrix}
	 	(DF)_{1}  & (DF)_{2} \\
	 	0 & (DF)_{4} 
	 \end{pmatrix} 
	 $  with respect to the decomposition $$\mathcal{A} = (VF)_{1} ^{ -1} (Im (I-P_{\alpha_{1}})) \tilde{\oplus} ((VF)_{1} ^{ -1} (R) \tilde{\oplus}  N_{1} ) \stackrel{DF}{\longrightarrow} DV^{-1} (Im (I-P_{\alpha_{1}}))  \tilde{\oplus} (DV^{-1} (R ^{\prime})\tilde{\oplus}  N_{2}^{\prime}   ) =  \mathcal{A} ,$$ where $ (DF)_{1} $ is an isomorphism (because $ DF = DV^{-1} VF $).  Hence, as in the proof of \cite[Lemma 2.7.10]{MT}  we can find a unitary operator $ U $ such that $ DF $ has the matrix $\begin{pmatrix}
	 	(DF)_{1}  & 0 \\
	 	0 & \tilde{(DF)_{4} }
	 \end{pmatrix} $
	with respect to the decomposition $$\mathcal{A} = (VF)_{1} ^{ -1} (Im (I-P_{\alpha_{1}})) \tilde{\oplus} U((VF)_{1} ^{ -1} (R) \tilde{\oplus}  N_{1} ) \stackrel{DF}{\longrightarrow} DV^{-1} (Im (I-P_{\alpha_{1}}))  \tilde{\oplus} (DV^{-1} (R ^{\prime})\tilde{\oplus}  N_{2}^{\prime}   ) =  \mathcal{A} .$$  Since $ U((VF)_{1} ^{ -1} (R) \tilde{\oplus}  N_{1} ) \cong (VF)_{1} ^{ -1} (R) \tilde{\oplus}  N_{1} ,$ by Lemma \ref{r10 l01} we conclude that \\ index $DF=$ index $D+$ index $ F .$ 
	\end{proof} 
Next, in a similar way as in the proof of \cite[Lemma 2.7.13]{MT} we can prove the following lemma. For the convenience of readers, we give the full proof here.
\begin{lemma}\label{invariantlemma1}
	Let $ F \in \mathcal{M} \mathcal{K} \Phi ( \mathcal{A} ) $ and $ K \in \mathcal{F} .$ Then $ F+K \in \mathcal{M} \mathcal{K} \Phi ( \mathcal{A} ) $ and $\text {\rm index } (F+K)= \text {\rm index } F.$
\end{lemma}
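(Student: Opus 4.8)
The plan is to run the same kind of argument that establishes well-definedness of the index. I would start from an $\mathcal{M}\mathcal{K}\Phi$-decomposition
$$ \mathcal{A} = M_{1} \tilde{\oplus} N_{1} \stackrel{F}{\longrightarrow} M_{2} \tilde{\oplus} N_{2} = \mathcal{A} $$
of $F$, with matrix $\operatorname{diag}(F_{1},F_{4})$, $F_{1}$ an isomorphism and $P_{N_{1}},P_{N_{2}}\in\mathcal{F}$, so that $\operatorname{index}F=[P_{N_{1}}]-[P_{N_{2}}]$. By Proposition \ref{r10 pr01} I would fix $\alpha_{0}$ and a closed submodule $\tilde{M_{1}}$ with $Im(I-P_{\alpha_{0}})\subseteq\tilde{M_{1}}$ and $\mathcal{A}=\tilde{M_{1}}\tilde{\oplus}N_{1}$, and then, by Corollary \ref{extracor7}, $F$ is diagonal, with matrix $\operatorname{diag}(\tilde{F_{1}},F_{4})$ and $\tilde{F_{1}}$ an isomorphism, with respect to $\mathcal{A}=\tilde{M_{1}}\tilde{\oplus}N_{1}\stackrel{F}{\longrightarrow}F(\tilde{M_{1}})\tilde{\oplus}N_{2}=\mathcal{A}$. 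The point is that $K\in\mathcal{F}$ forces $\|K-KP_{\alpha}\|\to 0$, so once $\alpha$ is large the perturbation $K$ becomes negligible on the ``large'' submodule $Im(I-P_{\alpha})$.

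Next I would split $\tilde{M_{1}}=Im(I-P_{\alpha})\tilde{\oplus}R$ for $\alpha\ge\alpha_{0}$ via \cite[Lemma 2.6]{IS3}, so that $\mathcal{A}=Im(I-P_{\alpha})\tilde{\oplus}R\tilde{\oplus}N_{1}$ and, since $F|_{\tilde{M_{1}}}$ is an isomorphism, also $\mathcal{A}=F(Im(I-P_{\alpha}))\tilde{\oplus}F(R)\tilde{\oplus}N_{2}$. As in the proof that the index is well defined, $R\tilde{\oplus}N_{1}\cong Im\,P_{\alpha}$ gives $P_{R\tilde{\oplus}N_{1}}\sim P_{\alpha}$, hence $P_{R},P_{R\tilde{\oplus}N_{1}}\in\mathcal{F}$. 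Choosing $\alpha$ large enough that $\|K(I-P_{\alpha})\|$ is smaller than the reciprocal of the product of $\|\tilde{F_{1}}^{-1}\|$ with the norm of a (uniformly bounded in $\alpha$) projection $\Pi$ onto $F(Im(I-P_{\alpha}))$ along $F(R)\tilde{\oplus}N_{2}$, and using $\Pi F|_{Im(I-P_{\alpha})}=F|_{Im(I-P_{\alpha})}$, I get that $\Pi(F+K)|_{Im(I-P_{\alpha})}=F|_{Im(I-P_{\alpha})}+\Pi K|_{Im(I-P_{\alpha})}$ is a norm-small perturbation of the isomorphism $F|_{Im(I-P_{\alpha})}$, hence an isomorphism. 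Thus, with respect to
$$ \mathcal{A}=Im(I-P_{\alpha})\tilde{\oplus}(R\tilde{\oplus}N_{1})\stackrel{F+K}{\longrightarrow}F(Im(I-P_{\alpha}))\tilde{\oplus}(F(R)\tilde{\oplus}N_{2})=\mathcal{A}, $$
$F+K$ has a block matrix with isomorphic $(1,1)$-corner. Applying the diagonalization of \cite[Lemma 2.7.10]{MT} exactly as in Lemma \ref{r10 l00} and Corollary \ref{extracor7}, I would replace $R\tilde{\oplus}N_{1}$ by an isomorphic copy, leave $F(R)\tilde{\oplus}N_{2}$ unchanged, and obtain a diagonal decomposition for $F+K$ with isomorphic $(1,1)$-corner; since $P_{F(R)}\in\mathcal{F}$ by Lemma \ref{r10 l03} and hence $P_{F(R)\tilde{\oplus}N_{2}}\in\mathcal{F}$ by Lemma \ref{r10 l04}, this shows $F+K\in\mathcal{M}\mathcal{K}\Phi(\mathcal{A})$.

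It then remains to compute the index. By Lemma \ref{r10 l04}, $[P_{R\tilde{\oplus}N_{1}}]=[P_{R}]+[P_{N_{1}}]$ and $[P_{F(R)\tilde{\oplus}N_{2}}]=[P_{F(R)}]+[P_{N_{2}}]$; since $F|_{R}$ is an isomorphism onto $F(R)$, Lemma \ref{r10 l01} gives $[P_{F(R)}]=[P_{R}]$, and passing to an isomorphic copy of $R\tilde{\oplus}N_{1}$ does not change its class, again by Lemma \ref{r10 l01}. Hence
$$ \operatorname{index}(F+K)=[P_{R\tilde{\oplus}N_{1}}]-[P_{F(R)\tilde{\oplus}N_{2}}]=([P_{R}]+[P_{N_{1}}])-([P_{R}]+[P_{N_{2}}])=[P_{N_{1}}]-[P_{N_{2}}]=\operatorname{index}F. $$
I expect the main obstacle to be the technical core of the middle paragraph: checking that a sufficiently small perturbation of the isomorphism $F|_{Im(I-P_{\alpha})}$ onto the complementable submodule $F(Im(I-P_{\alpha}))$ is again an isomorphism onto a \emph{complementable} submodule, with norm bounds on the auxiliary projections uniform in $\alpha$ (so that ``$\alpha$ large enough'' is not circular), and arranging the diagonalization so that the finite-type codomain summand $F(R)\tilde{\oplus}N_{2}$ is left literally unchanged and therefore contributes nothing spurious to the $K$-theory class — which amounts to following the bookkeeping of \cite[Lemma 2.7.10]{MT} as carefully as was already done in the proof of Lemma \ref{r10 l00}.
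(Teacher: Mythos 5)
Your proposal follows essentially the same route as the paper's proof: fix an $\mathcal{M}\mathcal{K}\Phi$-decomposition, use Proposition \ref{r10 pr01} and Corollary \ref{extracor7} to diagonalize $F$ over $\mathcal{A}=\tilde{M_{1}}\tilde{\oplus}N_{1}$, split off $Im(I-P_{\alpha})$ inside $\tilde{M_{1}}$ via \cite[Lemma 2.6]{IS3}, use the approximate unit to make $K$ small on $Im(I-P_{\alpha})$ (your reading $\|K-KP_{\alpha}\|\to 0$ is the right one; the paper's ``$\|KP_{\alpha_{1}}\|\le\|F_{1}^{-1}\|^{-1}$'' is a slip for $\|K(I-P_{\alpha_{1}})\|<\|F_{1}^{-1}\|^{-1}$), diagonalize the perturbed operator as in \cite[Lemma 2.7.13]{MT}, and compute the index exactly as you do, via Lemmas \ref{r10 l01}, \ref{r10 l03} and \ref{r10 l04}.

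The one point you leave open is precisely where your implementation deviates from the paper's: you bound the $(1,1)$-corner of $K$ using the skew projection $\Pi$ onto $F(Im(I-P_{\alpha}))$ along $F(R)\tilde{\oplus}N_{2}$, and you correctly note that ``$\alpha$ large enough'' is circular unless $\|\Pi\|$ is bounded uniformly in $\alpha$. The paper sidesteps this entirely: it writes the matrices of $F$ and $K$ with respect to the \emph{orthogonal} codomain decomposition $\mathcal{A}=F_{1}(Im(I-P_{\alpha_{1}}))\oplus F_{1}(Im(I-P_{\alpha_{1}}))^{\perp}$, so the relevant projection has norm one, the threshold $\|F_{1}^{-1}\|^{-1}$ depends only on the $\alpha_{0}$-level isomorphism, and only afterwards are the MT-style operators $\mathcal{U},V$ used to return to a decomposition whose second summands are $\mathcal{U}(\mathcal{R})\tilde{\oplus}\mathcal{U}(N_{1})$ and $F_{1}(\mathcal{R})\tilde{\oplus}N_{2}$. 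Your version is nevertheless salvageable without changing anything else: since $R=Im\,P_{\alpha}\cap\tilde{M_{1}}$, the projection of $\tilde{M_{1}}$ onto $Im(I-P_{\alpha})$ along $R$ is just $(I-P_{\alpha})|_{\tilde{M_{1}}}$, hence $\Pi=\tilde{F_{1}}(I-P_{\alpha})\tilde{F_{1}}^{-1}\Pi_{0}$, where $\Pi_{0}$ is the $\alpha$-independent projection onto $F(\tilde{M_{1}})$ along $N_{2}$; thus $\|\Pi\|\le\|\tilde{F_{1}}\|\,\|\tilde{F_{1}}^{-1}\|\,\|\Pi_{0}\|$ uniformly in $\alpha$, and likewise $\|(F|_{Im(I-P_{\alpha})})^{-1}\|\le\|\tilde{F_{1}}^{-1}\|$. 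With that supplement (and the strict inequality needed for the Neumann-series argument), your argument is complete and the index bookkeeping at the end coincides with the paper's.
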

	\begin{proof}
		 Let $ F \in \mathcal{M}  \mathcal{K} \Phi (\mathcal{A} ) $ and $$ \mathcal{A} = M_{1} \tilde{\oplus}  N_{1} \stackrel{F}{\longrightarrow} M_{2} \tilde{\oplus} N_{2} = \mathcal{A}  $$  be an $\mathcal{M}  \mathcal{K} \Phi $-decomposition for $F.$ By Proposition \ref{r10 pr01}, there exists some $ \alpha_{0} $ and a closed submodule $ \tilde M $ such that $ Im (I-P_{\alpha_{0}}) \subseteq \tilde M $ and $\mathcal{A} = \tilde M  \tilde{\oplus}  N_{1}.$ Then, by Corollary \ref{extracor7}, $F$ has the matrix 
	$\begin{pmatrix}
		F_{1}  & 0 \\
		0 & F_{4}
	\end{pmatrix} 
$ with respect to the decomposition $$ \mathcal{A} = \tilde M  \tilde{\oplus}  N_{1} \stackrel{F}{\longrightarrow} F(\tilde M ) \tilde{\oplus} N_{2} = \mathcal{A} ,$$ where $F_{1}  $ is an isomorphism. Let $ K \in \mathcal{F} .$ Since $ \lbrace P_{\alpha} \rbrace $ is an approximate unit for $\mathcal{F} ,$ we can find some $ \alpha_{1} \geq \alpha_{0
} $ such that $ \parallel K P_{\alpha_{1}} \parallel \leq \parallel F_{1}^{-1} \parallel ^{-1} .$\\
	We have that $ Im (I-P_{\alpha_{1}}) \subseteq Im (I-P_{\alpha_{0}}) \subseteq \tilde{M}  ,$ hence, by \cite[Lemma 2.6]{IS3} we obtain that $ \tilde{M} = Im (I-P_{\alpha_{1}}) \oplus \mathcal{R} $ where $\mathcal{R} = Im P_{\alpha_{1}} \cap \tilde{M} .$ Since $P_{\mathcal{R}} \leq P_{\alpha_{1}} ,$ we have $ P_{\mathcal{R}} \in \mathcal{F} .$ We get a decomposition 
	$ \mathcal{A} = F_{1} ( Im (I-P_{\alpha_{1}})) \tilde{\oplus} F_{1}( \mathcal{R} ) \tilde{\oplus} N_{2} .$ Since $ F_{1} $ is an isomorphism, by Lemma \ref{r10 l01} we get that $ P_{F_{1}( \mathcal{R} )} \sim P_{ \mathcal{R} } ,$ so $ P_{F_{1}( \mathcal{R} )} \in \mathcal{F} $ as $ P_{ \mathcal{R} } \in \mathcal{F} .$ Moreover, by Lemma \ref{r10 l04} we deduce that $P_{ F_{1} (\mathcal{R}) \tilde{\oplus} N_{2} } \in \mathcal{F} $ and 
	$$[ P_{ F_{1} (\mathcal{R}) \tilde{\oplus} N_{2} } ] = [ P_{ F_{1} (\mathcal{R})} ] + [P_{N_{2}}] = [ P_{ \mathcal{R}} ] + [P_{N_{2}}] .$$ With respect to the decomposition 
	$$ \mathcal{A} = Im (I-P_{\alpha_{1}}) \tilde{\oplus} \mathcal{R} \tilde{\oplus} N_{1} \stackrel{F}{\longrightarrow} 
	F_{1} ( Im (I-P_{\alpha_{1}})) \oplus F_{1} ( Im (I-P_{\alpha_{1}}))^{\perp}  = \mathcal{A} ,$$
	$F$ has the matrix 
	$\begin{pmatrix}
		F_{1}  & \tilde{F_{2}} \\
		0 & \tilde{F_{4}}
	\end{pmatrix} 
	.$
	Let 
	$\begin{pmatrix}
		K_{1}  & K_{2} \\
		K_{3} & K_{4}
	\end{pmatrix} 
	$
	be the matrix of $K$ with respect to the same decomposition. Then 
	$$ \parallel K_{1} \parallel \leq \parallel K_{\vert_{Im \text{ }P_{\alpha_{1}}}} \parallel  = \parallel KP_{\alpha_{1}} \parallel \leq \parallel F_{1}^{-1} \parallel^{-1}.$$
	As in the proof of \cite[Lemma 2.7.13]{MT} we can find unitary operators $\mathcal{U} $ and $ V $ such that 
	$$ \mathcal{A} = Im (I-P_{\alpha_{1}}) \tilde{\oplus} ( \mathcal{U}(\mathcal{R}) \tilde{\oplus} \mathcal{U}(N_{1}) )
	\stackrel{F+K}{\longrightarrow} V( Im (I-P_{\alpha_{1}}) ) \tilde{\oplus} ( F_{1}(\mathcal{R})  \tilde{\oplus} N_{2} ) = \mathcal{A} $$ 
	is an $\mathcal{M} \mathcal{K}\Phi $-decomposition for the operator $F+K .$ Indeed, by Lemma \ref{r10 l04} we have that $ P_{\mathcal{R} \tilde \oplus N_{1}} \in \mathcal{F} $ and $ [P_{\mathcal{R} \tilde \oplus N_{1}}] = [P_{\mathcal{R}}] + [P_{N_{1}}] ,$ whereas by Lemma \ref{r10 l01} we get that $ P_{\mathcal{U} (\mathcal{R} \tilde{\oplus} N_{1} )} \sim P_{\mathcal{R} \tilde \oplus N_{1}} .$ Hence, we deduce that
	$$ index(F+K)=[ P_{\mathcal{U} (\mathcal{R} \tilde{\oplus} N_{1} )}] - [P_{F_{1}(\mathcal{R}) \tilde{\oplus} N_{2} }  ] 
	= [P_{\mathcal{R}}] - [P_{N_{1}}] - [P_{\mathcal{R}}] - [P_{N_{2}}] = index F.$$
\end{proof}
Finally, in a similar way as in the proof of \cite[Theorem 2.7.14]{MT}, we can prove the next theorem. For the convenience of readers, we give the full proof here.
\begin{theorem}
	Let $ F,D, D^{ \prime} \in B(\mathcal{A}). $ If there exist some $ K_{1} , K_{2} \in \mathcal{F}  $ such that  $$ FD = I + K_{1}, \text{ } D^{ \prime} F = I + K_{2}   ,$$ 
	then $ F \in \mathcal{M} \mathcal{K}\Phi (\mathcal{A}) .$
\end{theorem}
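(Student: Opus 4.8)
The plan is to show that $F$ is invertible up to a pair of projections lying in $\mathcal{F}$, and then to conclude via Lemma \ref{r10 l02} and Lemma \ref{r10 l01}. First I would use $D^{\prime}F=I+K_{2}$ to pin down the ``kernel''. Since $\{P_{\alpha}\}$ is an approximate unit for $\mathcal{F}$ and $K_{1},K_{2}\in\mathcal{F}$, fix $\alpha$ so large that, setting $p:=P_{\alpha}$, both $\parallel (1-p)K_{2}(1-p)\parallel<1$ and $\parallel K_{1}(1-p)\parallel<1$. From $D^{\prime}F=I+K_{2}$ we get $(1-p)D^{\prime}F(1-p)=(1-p)+(1-p)K_{2}(1-p)$, invertible in the corner $C^{*}$-algebra $(1-p)\mathcal{A}(1-p)$; reading off $x=(1-p)x=((1-p)D^{\prime}F(1-p))^{-1}(1-p)D^{\prime}Fx$ for $x\in(1-p)\mathcal{A}$ shows $F_{\vert_{(1-p)\mathcal{A}}}$ is bounded below, so $L:=F((1-p)\mathcal{A})=Im\,F(1-p)$ is closed, orthogonally complementable by \cite[Theorem 2.3.3]{MT}, and $F_{\vert_{(1-p)\mathcal{A}}}$ is an isomorphism onto $L$. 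Exactly as in the proof of Lemma \ref{r10 l01}, the inclusion $\iota\colon(1-p)\mathcal{A}\hookrightarrow\mathcal{A}$ and $(F_{\vert_{(1-p)\mathcal{A}}})^{-1}$ are adjointable, so $b:=\iota\,(F_{\vert_{(1-p)\mathcal{A}}})^{-1}P_{L}\in B(\mathcal{A})$ satisfies $bP_{L}F(1-p)=1-p$ and $P_{L}F(1-p)b=P_{L}$; since $Im\,F(1-p)=L$ forces $P_{L}F(1-p)=F(1-p)$, this says precisely that $F$ is invertible up to the pair $(p,P_{L^{\perp}})$.

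Next I would use $FD=I+K_{1}$ to show that the ``cokernel'' is of finite type, i.e. $P_{L^{\perp}}\in\mathcal{F}$. Because $\parallel K_{1}(1-p)\parallel<1$, the element $1+K_{1}(1-p)$ is invertible in $\mathcal{A}$; it fixes $p\mathcal{A}$ pointwise and carries $(1-p)\mathcal{A}$ onto $G:=(1+K_{1})((1-p)\mathcal{A})=FD((1-p)\mathcal{A})\subseteq Im\,F$, so $\mathcal{A}=G\,\tilde{\oplus}\,p\mathcal{A}$ and hence $\mathcal{A}=Im\,F+p\mathcal{A}$. Combining with $Im\,F=L+F(p\mathcal{A})$ and applying $P_{L^{\perp}}$ (which kills $L$) gives $L^{\perp}=Im\,(P_{L^{\perp}}Fp)+Im\,(P_{L^{\perp}}p)$, i.e. $L^{\perp}$ is the range of the adjointable operator $(x,y)\mapsto P_{L^{\perp}}Fp\,x+P_{L^{\perp}}p\,y$ from $\mathcal{A}\oplus\mathcal{A}$ to $\mathcal{A}$, whose ``matrix entries'' $P_{L^{\perp}}Fp$ and $P_{L^{\perp}}p$ lie in $\mathcal{F}$. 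Since $L^{\perp}$ is closed, \cite[Theorem 2.3.3]{MT} yields that the positive element $S:=(P_{L^{\perp}}Fp)(P_{L^{\perp}}Fp)^{*}+(P_{L^{\perp}}p)(P_{L^{\perp}}p)^{*}\in\mathcal{F}$ has $Im\,S=L^{\perp}$, so $0$ is isolated in $\sigma(S)$ and the range projection can be written $P_{L^{\perp}}=g(S)S$ for a continuous $g$ with $g(0)=0$; as $\mathcal{F}$ is an ideal, $P_{L^{\perp}}=g(S)S\in\mathcal{F}$.

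To finish, $F$ is now invertible up to a pair $(p,q)$ of projections of $\mathcal{F}$, namely $q=P_{L^{\perp}}$. By Lemma \ref{r10 l02} there is a decomposition $\mathcal{A}=M_{1}\tilde{\oplus}N_{1}\stackrel{F}{\longrightarrow}M_{2}\tilde{\oplus}N_{2}=\mathcal{A}$ with respect to which $F$ is block-diagonal with an isomorphism in the $(1,1)$-position and $P_{N_{1}}\sim p$, $P_{N_{2}}\sim q$. Finally, any projection Murray--von Neumann equivalent to an element of the ideal $\mathcal{F}$ lies in $\mathcal{F}$: if $v^{*}v=p$ and $vv^{*}=P_{N_{1}}$, then $P_{N_{1}}=vpv^{*}\in\mathcal{F}$, and likewise $P_{N_{2}}\in\mathcal{F}$. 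Hence the above is an $\mathcal{M}\mathcal{K}\Phi$-decomposition and $F\in\mathcal{M}\mathcal{K}\Phi(\mathcal{A})$.

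The step I expect to be the real obstacle is the second one: one genuinely needs the right $\mathcal{F}$-inverse $D$ (the left inverse alone does not control the cokernel), and even with it the delicate passage is from ``$L^{\perp}$ is closed and equals a finite sum of ranges of elements of $\mathcal{F}$'' to ``$P_{L^{\perp}}\in\mathcal{F}$'', which rests on the closed-range theorem \cite[Theorem 2.3.3]{MT} together with the functional-calculus identity $P_{Im\,S}=g(S)S$ for a positive $S$ with closed range. Everything else is bookkeeping with corners, approximate units, and the already established Lemmas \ref{r10 l01} and \ref{r10 l02}.
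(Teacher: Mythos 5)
Your proof is correct, but it takes a genuinely different route from the paper's. You work in the Ke\v{c}ki\'{c}--Lazovi\'{c} picture: the corner computation with the approximate unit (invertibility of $(1-p)D^{\prime}F(1-p)$ in $(1-p)\mathcal{A}(1-p)$) shows $F$ is bounded below on $(1-p)\mathcal{A}$, hence invertible up to the pair $(p,P_{L^{\perp}})$ with $L=Im\,F(1-p)$; the right quasi-inverse $FD=I+K_{1}$ gives $\mathcal{A}=Im\,F+p\mathcal{A}$, from which $L^{\perp}=Im\,S$ with $S=TT^{*}\in\mathcal{F}$, and the identity $P_{Im\,S}=g(S)S$ (essential here, since $\mathcal{F}$ need not be norm-closed, so one must factor the range projection through $S$ itself) yields $P_{L^{\perp}}\in\mathcal{F}$; finally Lemma \ref{r10 l02} plus the observation $P_{N_{1}}=vpv^{*}\in\mathcal{F}$ transfers this into an $\mathcal{M}\mathcal{K}\Phi$-decomposition. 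The paper instead stays entirely inside the decomposition framework of Mishchenko--Fomenko (the proof of \cite[Theorem 2.7.14]{MT}): it applies the perturbation result Lemma \ref{invariantlemma1} to $I+K_{1}$, uses the projection $\sqcap$ onto $N_{2}$ along $M_{2}$ to make $(I-\sqcap)F$ an epimorphism onto $M_{2}$, applies Lemma \ref{invariantlemma1} again to $D^{\prime}(I-\sqcap)F=I+\tilde{K}_{2}$ to control $\ker(I-\sqcap)F$ (which is of finite type as a direct summand of $\overline{N}_{1}$), and then diagonalizes as in \cite[Lemma 2.7.10]{MT}. What each approach buys: the paper's argument reuses machinery already established (Lemma \ref{invariantlemma1} and the diagonalization technique) and never leaves the module-decomposition language; yours is more self-contained, needs neither Lemma \ref{invariantlemma1} nor a fresh diagonalization (Lemma \ref{r10 l02} already packages that), and makes the equivalence with the original Ke\v{c}ki\'{c}--Lazovi\'{c} definition explicit -- at the price of the extra care you correctly identified at the step $P_{L^{\perp}}\in\mathcal{F}$, where you need $Im\,TT^{*}=Im\,T$ for a closed-range adjointable $T$ on $\mathcal{A}\oplus\mathcal{A}$ and the ideal trick $P_{L^{\perp}}=g(S)S$; as written, that step is sound.
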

	
	\begin{proof}  As in the proof of \cite[Theorem 2.7.14]{MT} we obtain from  Lemma \ref{invariantlemma1} an $\mathcal{M} \mathcal{K} \Phi$-decomposition for $I+K_{1} $ 
	$$\mathcal{A} = M_{1} \tilde{\oplus} N_{1}  \stackrel{I+K_{1}}{\longrightarrow} M_{2} \tilde{\oplus} N_{2} = \mathcal{A}   $$ 
	and we let $\sqcap $ be the projection onto $ N_{2}$ along $M_{2} .$ Then $(I- \sqcap) F $ is an epimorphism onto $M_{2} $ and $D^{\prime} (I- \sqcap) F = I + \tilde{K}_{2} $ for some $\tilde{K}_{2} \in \mathcal{F} .$ This follows since $ \sqcap  \in \mathcal{F}  $ by Remark \ref{skewprojfinite}, so $ D^{ \prime} \sqcap F\in \mathcal{F}  $ because $ \mathcal{F} $ is an ideal. Hence $D^{\prime} (I- \sqcap) F \in \mathcal{M} \mathcal{K}\Phi (\mathcal{A}) $ by Lemma \ref{invariantlemma1}, so there exists an $ \mathcal{M} \mathcal{K} \Phi$-decomposition 
	$$\mathcal{A} = \overline{M}_{1} \tilde{\oplus} \overline{N}_{1}  
	\stackrel{ D^{\prime} ( I-\sqcap ) F}{\longrightarrow} 
	\overline{M}_{2}  \tilde{\oplus} \overline{N}_{2}  = \mathcal{A}   $$ 
	for  $D^{\prime} ( I-\sqcap ) F.$  By the same arguments as in the proof of  \cite[Theorem 2.7.14]{MT} we get that $(I- \sqcap)F_{\vert_{\overline{M}_{1}}} $ is an isomorphism onto $ (I-\sqcap) F ( \overline{M}_{1} )$ and
	$\ker (I-\sqcap)F \subseteq \overline{N}_{1} .$ Since $Im (I-\sqcap)F = M_{2}$ and $(I-\sqcap)F $ is adjointable by  \cite[Corollary 2.5.3]{MT}, by \cite[Theorem 2.3.3]{MT} we have that $ \ker (I-\sqcap)F $ is orthogonally complementable in $\mathcal{A} .$  Hence, by \cite[Lemma 2.6]{IS3} we have that $\ker (I-\sqcap)F $ is orthogonally complementable  in $\overline{N}_{1} .$ Thus, $$ \mathcal{A}= \overline{M}_{1} \tilde{\oplus} \mathcal{R}  \tilde{\oplus} \ker (I-\sqcap)F ,$$ where $\mathcal{R}$ is the orthogonal complement of $\ker (I-\sqcap)F $ in $\overline{N}_{1} .$ However, since $\ker (I-\sqcap)F $ closed and complementable in $\mathcal{A} ,$ by Remark \ref{r10 r01}  $\ker (I-\sqcap)F $ is orthogonally complementable in $\mathcal{A} .$ Hence, since $ \ker (I-\sqcap)F \subseteq \overline{N}_{1} $ and $P_{\overline{N}_{1}} \in \mathcal{F} ,$ we get that $P_{ \ker (I-\sqcap)F } \in \mathcal{F} .$  Since $Im ( I- \sqcap) F ,$ (which is equal to $ M_{2} $), is closed, it follows that $ ( I- \sqcap) F_{\vert_{ ( \overline{M}_{1} \tilde{\oplus} \mathcal{R} ) }} $ is an isomorphism onto $M_{2} = Im ( I- \sqcap) .$ This gives that $F$ has the matrix 
	$\begin{pmatrix}
		F_{1}  & F_{2} \\
		F_{3} & F_{4}
	\end{pmatrix} 
	$ 
	with respect to the decomposition 
	$$\mathcal{A} = ( \overline{M}_{1} \tilde{\oplus} \mathcal{R} )  \tilde{\oplus} \ker ( (I-\sqcap) F) 
	\stackrel{F}{\longrightarrow} M_{2} \tilde{\oplus} N_{2} = \mathcal{A},$$ 
	where $ F_{1} $ is an isomorphism. By the same arguments as in the proof of \cite[Lemma 2.7.10]{MT} we can find isomorphisms $\mathcal{U} $ and $ V $ of $ \mathcal{A} $ such that 
	$$\mathcal{A} = ( \overline{M}_{1} \tilde{\oplus} \mathcal{R} )  \tilde{\oplus} \mathcal{U} (\ker ( (I-\sqcap) F) ) 
	\stackrel{F}{\longrightarrow} V( M_{2} ) \tilde{\oplus} N_{2} = \mathcal{A}$$  
	is a decomposition with respect to which $F$ has the matrix 
	$\begin{pmatrix}
		\tilde{F}_{1}  & 0 \\
		0 & \tilde{F}_{4}
	\end{pmatrix} 
	$ 
	where $\tilde{F_{1}} $ is an isomorphism. Since $P_{\ker (( I- \sqcap) F)} \in \mathcal{F} ,$ by Lemma \ref{r10 l03} we have that $\text{ } \mathcal{U} ({\ker (( I- \sqcap) F)} ) $ is orthogonally complementable in $\mathcal{A} $ and $P_{ \mathcal{U} ({\ker (( I- \sqcap) F)} )} \in \mathcal{F} .$ Thus we have obtained an $\mathcal{M} \mathcal{K} \Phi$-decomposition for the operator $F .$
	\end{proof} 

\begin{remark}
	By Lemma \ref{r10 l02} it follows that our approach to Fredholm theory in unital $ C^{ *} $ -algebras is equivalent to the approach established in \cite{KL}.
\end{remark}

\bibliographystyle{amsplain}

\end{document}